\pgfplotsset{compat=1.18}
\newcommand{\kS}{{\ensuremath{\mathfrak{S}}} }
\newcommand{\kA}{{\ensuremath{\mathfrak{A}}} }
\definecolor{fond}{rgb}{0.05,0.05,0.25}
\DeclareMathOperator{\Poiss}{Poiss}
\DeclareMathOperator{\dtv}{d_{TV}}
\DeclareMathOperator{\dsep}{d_{sep}}
\DeclareMathOperator{\mlh}{mlh}
\DeclareMathOperator{\ch}{\mathrm{ch}}
\DeclareMathOperator{\Id}{\mathrm{Id}}
\DeclareMathOperator{\CTST}{\mathcal{CTST}}
\DeclareMathOperator{\FIT}{FIT}
\DeclareMathOperator{\FITStates}{FITStates}
\setlist[itemize,1]{nosep}
\setlist[enumerate,1]{nosep,label=(\alph*)}
\newtheorem*{theorem*}{Theorem}
\newtheorem{theorem}{Theorem}[section]
\newtheorem{proposition}[theorem]{Proposition}
\newtheorem{lemma}[theorem]{Lemma}
\newtheorem{definition}[theorem]{Definition}
\newtheorem{conjecture}[theorem]{Conjecture}
\newtheorem{question}[theorem]{Question}
\theoremstyle{remark}
\newtheorem{remark}[theorem]{Remark}
\newcommand{\cyan}[1]{\textcolor{cyan}{ #1 }}
\newcommand{\cA}{{\ensuremath{\mathcal A}} }
\newcommand{\cB}{{\ensuremath{\mathcal B}} }
\newcommand{\cC}{{\ensuremath{\mathcal C}} }
\newcommand{\cD}{{\ensuremath{\mathcal D}} }
\newcommand{\cE}{{\ensuremath{\mathcal E}} }
\newcommand{\cL}{{\ensuremath{\mathcal L}} }
\newcommand{\cN}{{\ensuremath{\mathcal N}} }
\newcommand{\cP}{{\ensuremath{\mathcal P}} }
\newcommand{\bbE}{{\ensuremath{\mathbb E}} }
\newcommand{\bbN}{{\ensuremath{\mathbb N}} }
\newcommand{\bbR}{{\ensuremath{\mathbb R}} }
\newcommand{\bbP}{{\ensuremath{\mathbb P}} }
\newcommand{\ag}{\left\{ } 
\newcommand{\ad}{\right\} }
\newcommand{\cg}{\left[}
\newcommand{\cd}{\right]}
\newcommand{\pg}{\left(} 
\newcommand{\pd}{\right)}
\newcommand{\bg}{\left|}
\newcommand{\bd}{\right|}
\newcommand{\lf}{\left\lfloor}
\newcommand{\rf}{\right\rfloor}
\newcommand{\lc}{\left\lceil}
\newcommand{\rc}{\right\rceil}
\newcommand{\du}{{\ensuremath{\;:\;}}} 
\newcommand*\bigcdot{\mathpalette\bigcdot@{.5}}
\newcommand*\bigcdot@[2]{\mathbin{\vcenter{\hbox{\scalebox{#2}{$\m@th#1\bullet$}}}}}
\numberwithin{equation}{section}
\title{Every cutoff profile is possible
}
\author{Lucas Teyssier}
\affil{University of British Columbia, \texttt{teyssier@math.ubc.ca}}
\date{\today}
\begin{document}

\renewcommand{\theparagraph}{\thesubsection.\arabic{paragraph}} 

\maketitle

\begin{abstract}
We introduce fruit-inosculated-tree Markov chains. These chains have easily tunable parameters and are a good source of examples. In particular, we prove that every cutoff profile is possible, with any cutoff time and window size. 
\end{abstract}

\renewcommand\abstractname{Résumé}
\begin{abstract}
Nous introduisons les arbres à inosculation en un fruit. Ces chaines de Markov ont des paramètres facilement ajustables et sont une bonne source d’exemples. Nous montrons en particulier que tous les profils de coupure sont possibles, pour n’importe quel temps de coupure et taille de fenêtre.
\end{abstract}

\renewcommand\abstractname{Resumo}
\begin{abstract}
Ni enkondukas arbojn kun inoskuliĝo je frukto. Tiuj Markov-ĉenoj havas facile agordeblajn parametrojn kaj estas bona fonto de ekzemploj. Ni montras ke ĉiuj tranĉoprofiloj eblas, por ajna tranĉotempo kaj fenestra grandeco.
\end{abstract}

\tableofcontents

\section{Introduction}

\subsection{Cutoff phenomenon}

Let $(X_t)_{t\geq 0}$ be an ergodic Markov chain on a finite state space $S$, and denote its stationary measure by $\pi$. For $t\geq 0$ and $x\in S$, denote the law of $X_t$ conditioned on $X_0 = x$  by $\mu_{x,t}$. Define the (worst case, total variation) distance to stationarity of the chain by
\begin{equation}
    \mathrm{d}(t) = \max_{x\in S}\dtv\pg \mu_{x,t}, \pi \pd, 
\end{equation}
where the total variation distance between two probability measures $\mu$ and $\nu$ on $S$ is given by
\begin{equation}
    \dtv(\mu,\nu) = \max_{A \subset S} \bg \mu(A) - \nu(A)\bd  = \frac{1}{2} \sum_{x\in S}\bg \mu(x) - \nu(x)\bd.
\end{equation}
\begin{definition}
    Let $((X^{(n)}_t)_{t\geq 0})_{n\geq 1}$ be a sequence of ergodic Markov chains, in discrete or continuous time. We say that that $((X^{(n)}_t)_{t\geq 0})_{n\geq 1}$ exhibits a total variation \textbf{cutoff} at a (sequence of) times $\pg t_n \pd_{n \geq 1}$ if for every $\varepsilon>0$, we have 
    \begin{equation}
        \mathrm{d}^{(n)}\pg \lf (1-\varepsilon) t_n \rf \pd \xrightarrow[n\to \infty]{} 1 \quad \text{ and } \quad \mathrm{d}^{(n)}\pg \lc (1+\varepsilon) t_n \rc \pd \xrightarrow[n\to \infty]{} 0,
    \end{equation}
where for $n\geq 1$, $\mathrm{d}^{(n)}(\cdot)$ is the distance to stationarity for the chain $(X^{(n)}_t)_{t\geq 0}$.
\end{definition}

The cutoff phenomenon was proved for random transpositions by Diaconis and Shahshahani \cite{DiaconisShahshahani1981}, and for the riffle shuffle by Aldous \cite{Aldous1983mixing}. It is now known for hundreds of models and in a variety of contexts, including models of statistical physics \cite{LubetzkySly2013, Lacoin2016cutoffadjacent, LabbéLacoin2019}, diffusions on compact Lie groups \cite{Méliot2014}, random walks on Ramanujan graphs \cite{LubetzkyPeres2016}, and for processes with non-negative curvature \cite{Salez2024, PedrottiSalez2025newcriterion} under mild additional conditions. We refer to the books \cite{LivreLevinPeres2019MarkovChainsAndMixingTimesSecondEdition, LivreDiaconisFulman2023} for more background.

\medskip

\subsection{Cutoff profiles}\label{s: cutoff profiles definitions and literature}

For some models it is possible to push the analysis further, and to also understand how the distance to stationarity behaves within the phase transition. If $(a_n)$ and $(b_n)$ are sequences of positive real numbers, we write $a_n=o(b_n)$ or $a_n \lll b_n$ if $a_n/b_n \to 0$ as $n\to \infty$. 

\begin{definition}
    A \textbf{continuous time scaling triplet} is a triplet $((t_n), (w_n),p)$, where $(t_n)$ and $(w_n)$ are two sequences of (positive) real numbers such that $\sqrt{t_n}\lll w_n \lll t_n$ as $n\to \infty$, and $p\du \bbR \to [0,1]$ is a weakly decreasing continuous function such that $p(c) \xrightarrow[c\to - \infty]{} 1$ and  $p(c) \xrightarrow[c\to + \infty]{} 0$. We denote the set of all continuous time scaling triplets by $\CTST$.
\end{definition}

\begin{definition}
Let $((X^{(n)}_t)_{t\geq 0})_{n\geq 1}$ be a sequence of continuous-time Markov chains. If there exists $((t_n), (w_n),p) \in \CTST$ such that for every $c\in \bbR$ we have
\begin{equation}
    \mathrm{d}^{(n)}\pg t_n + cw_n \pd \xrightarrow[n\to \infty]{} p(c),
\end{equation}
then the function $p$ is called \textbf{cutoff profile} and $w_n$ is called  \textbf{window size}.
\end{definition}

\begin{remark}\label{rem: equivalence scaling triplets}
   To be precise, cutoff profiles are defined uniquely only up to equivalence of scaling triplets, where two scaling triplets $((t_{1,n}, (w_{1,n}),p_1)$ and $((t_{2,n}), (w_{2,n}),p_2)$ are \textbf{equivalent} if there exist $\alpha, \beta \in \bbR_+^*$ such that 
\begin{enumerate}
    \item $p_2(\alpha c + \beta) = p_1(c)$ for every $c\in \bbR$, 
    \item $w_{1,n} = (\alpha + o(1))w_{2,n}$,
    \item $t_{1,n} = t_{2,n} + (\beta + o(1)) w_{2,n}$ as $n\to \infty$.
\end{enumerate}
For most classical Markov chains, there is a natural choice of scaling. It is then standard to omit the equivalence relation and call the function $p$ of such a scaling triplet \textit{the} cutoff profile. Similarly, what matters for the window size is its order of magnitude, rather than the precise constant in front.
\end{remark}

\begin{remark}
    We also note that the sequences $(t_n)$ and $(w_n)$ may not have asymptotic equivalents: for the affine walks on the hypercube studied in \cite{DiaconisGraham1992AffineWalkHypercube} both sequences have an oscillating behaviour!
\end{remark}

Historically, cutoff profiles were found only for a handful of Markov chains: the simple random walk on the hypercube \cite{DiaconisGrahamMorrison1990hypercube}, the top-to-random shuffle \cite{DiaconisFillPitman1992toptorandom}, and the riffle shuffle \cite{BayerDiaconis1992}. 

Cutoff profiles can have different shapes. The most common one is the Gaussian shape $c\mapsto \dtv\pg \cN(0,1), \cN(e^{-c}, 1)  \pd$ of the hypercube and riffle shuffle \cite{DiaconisGrahamMorrison1990hypercube, BayerDiaconis1992}, and the second most common one is the Poissonian shape $c\mapsto \dtv\pg \Poiss(1), \Poiss(e^{-c}) \pd$ of random transpositions \cite{Teyssier2020}.
Less common profiles include the free Meixner shape of diffusions on quantum groups \cite{FreslonTeyssierWang2022}, and shapes related to the largest eigenvalue of random matrices as for the asymmetric simple exclusion process on a segment \cite{BufetovNejjar2022}. 

\medskip

In the last years a variety of new profiles has been found, and techniques to find cutoff profiles are blooming. Below, we attempt to provide a comprehensive list of known profiles to date. 

\medskip

Apart from the hypercube and the riffle shuffle, a Gaussian profile was proved for simple random walks on Ramanujan graphs \cite{LubetzkyPeres2016}, the simple exclusion process on the segment \cite{Lacoin2016cutoffprofileexclusioncircle}, for the averaging process \cite{ChatterjeeDiaconisSlyZhang2022repeatedaverages}, for generalized Ehrenfest urns \cite{NestoridiOlesker-Taylor2022limitprofiles} (whose arXiv version also contains a short proof for the hypercube), the Bernoulli--Laplace urn model \cite{Olesker-TaylorSchmid2025} with many particles, and the shelf shuffle \cite{ChenOttolini2025shelfshuffle}.
Normal distributions appear as limits of various types of observables in probability. For Gaussian profiles there is often a hidden central limit theorem, and it is therefore not a surprise that this shape arises in different contexts and is the most common one.

\medskip
 
The Poissonian shape typically appears for random walks on symmetric groups, when the last observable to mix is the number of fixed points. Note that $\Poiss(1)$ is the distribution of the number of fixed points of a uniformly random permutation, asymptotically.
After being found for random transpositions, the Poissonian profile was extended to cycles of length $o(n)$ in \cite{NestoridiOlesker-Taylor2022limitprofiles}, is now known for conjugacy invariant walks up to a large support \cite{Teyssier2025ProfilesConjugacyInvariant}. Nestoridi \cite{Nestoridi2024comparisonstar} developed a comparison technique that enabled extending this profile to chains that are not conjugacy invariant, such as star transpositions; and this technique was later used to find the cutoff profile of other related models \cite{GhoshKumari2025profilewarptoptwo, ArfaeeNestoridi2025JucysMurphyTranspositions}. A Poissonian profile with adjusted parameters is also known for some particle systems that can be obtained as projections of random transpositions \cite{NestoridiOlesker-Taylor2024ProfilesProjections}. Finally, while the proofs of the aforementioned Poissonian profiles rely on the representation theory of symmetric groups to understand compensations between the eigenvectors of these chains, a more probabilistic proof of the profile for random transpositions was recently found \cite{JainSawhney2024transpositionprofileotherproof}: it still crucially relies on representation theory for the eigenvalues, but replaces the understanding of the eigenvectors by a probabilistic extraction of the number of fixed points.

\medskip

Some recently found cutoff profiles have \textit{Tracy--Widom} shapes, which are the cumulative distribution function of the largest eigenvalue of different models of random matrices: a GUE shape for the asymmetric simple exclusion process (ASEP) on a segment \cite{BufetovNejjar2022}, a GOE shape for the Metropolis biased card shuffling \cite{Zhang2024CutoffProfileMetropolisBiasedCardShuffling}, and shapes interpolating between GOE and GSE for the ASEP with one open boundary \cite{HeSchmid2023ProfileASEPOneOpenBoundary}. The ASEP can also exhibit limit profiles given by the one-point marginals of Airy processes \cite{BernalNejjar2025LimitProfilesASEP}.

\medskip

Méliot \cite{Méliot2014} proved the cutoff for the diffusion on compact Lie groups and computed explicit formulas for densities at any time. However there was so far no success in computing the associated cutoff profiles. The cutoff profiles were effectively computed on some related quantum objects such as the quantum orthogonal group and for quantum random transpositions \cite{FreslonTeyssierWang2022}. Recently, Delhaye \cite{Delhaye2024unitary} defined a Brownian motion on the quantum unitary group, proved that it exhibits a cutoff, and found the $c>0$ part of the cutoff profile.

\medskip

The contributions of the largest eigenvalues taken individually are typically small in the $c>0$ part of the profile, while for $c<0$, they are all large and compensate. This causes the speed of convergence to the profile to be much faster on the right part of the window, and explains why in some cases it is much harder to understand the $c<0$ part than the $c>0$ one. This issue leads to further complications in the quantum setting: in \cite{FreslonTeyssierWang2022, Delhaye2024unitary}, the distribution of the processes are absolutely continuous with respect to the Haar state in the $c>0$ part of the profile, but not in the $c<0$ part. Going around the lack of absolute continuity requires extracting atoms, and the formulas of the cutoff profiles involve distributions that have atoms in their $c<0$ part.

\medskip

Let us also mention that for any (sequence of) discrete time Markov chain exhibiting cutoff, adding enough laziness leads to a Gaussian profile. Of course, obtaining a profile in such a way would not improve the understanding of the chain and would therefore not be interesting.

Doing the opposite can however be very interesting: in some cases there is some structure and laziness prevents us from seeing it. In these cases removing the laziness from the chain can lead to a more precise analysis of the important parameters. To give a concrete example, for conjugacy invariant walks on $\mathfrak{S}_n$ with laziness $1/2$ (or when considered in continuous time), the profile is inherently Poissonian but becomes Gaussian if “$w_n = o(\sqrt{t_n})$”, that is for conjugacy classes with support sizes diverging faster than $n/\ln n$. 
\subsection{Main results}

As described in Section \ref{s: cutoff profiles definitions and literature}, several shapes of cutoff profiles were found. It is natural to wonder if any shape is possible. We prove that this is the case. 

\begin{theorem}\label{thm: main theorem in simple continuous time form}
For every continuous time scaling triplet $((t_n), (w_n),p)$, there exists a sequence of continuous time Markov chains $((X^{(n)}_t)_{t\in \bbR_+})_{n\geq 1}$ such that for every $c\in \bbR$, we have 
\begin{equation*}
\mathrm{d}^{(n)}\pg t_n + c w_n\pd \xrightarrow[n\to+\infty]{} p(c).
\end{equation*}    
\end{theorem}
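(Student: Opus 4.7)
The plan is to construct, for each target scaling triplet $((t_n),(w_n),p)$, a sequence of continuous-time Markov chains whose distance to equilibrium can be tuned branch by branch to match any prescribed profile. Since $p$ is continuous, weakly decreasing, and has limits $1$ and $0$, the law $\nu$ on $\bbR$ defined by $\nu((c,\infty)) = p(c)$ is a probability measure with no atom. I fix a sequence of finite grids $c_1^{(n)} < \cdots < c_{k_n}^{(n)}$ whose spacing tends to $0$ and whose range tends to $\bbR$, and set $\alpha_i^{(n)} := p(c_{i-1}^{(n)}) - p(c_i^{(n)})$. Then the step function $p_n(c) := \sum_{i=1}^{k_n} \alpha_i^{(n)} \mathbf{1}[c < c_i^{(n)}]$ converges pointwise to $p$, so it suffices to build, for each $n$, a chain whose distance to stationarity at time $t_n + cw_n$ equals $p_n(c) + o(1)$ for each $c \in \bbR$; a diagonal extraction in $n$ then proves the theorem.

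For the chain I take, in continuous time with all edge rates equal to $1$ unless stated otherwise, a directed ``trunk'' $s = s_0 \to s_1 \to \cdots \to s_{T_n} = F$ of length $T_n := \lfloor t_n/2 \rfloor$, a central ``fruit'' vertex $F$, and $k_n$ directed ``branches'' $F \to y_{i,1} \to \cdots \to y_{i,\tau_i^{(n)}} \to R$ with $\tau_i^{(n)} := \lfloor t_n/2 + c_i^{(n)} w_n \rfloor$, where the first edge $F \to y_{i,1}$ has rate $\alpha_i^{(n)}$ so that at each visit to $F$ the branch $i$ is picked with probability $\alpha_i^{(n)}$. All branches terminate in a single sink $R$, and I add a return edge $R \to s$ of rate $\varepsilon_n$ small enough that $\varepsilon_n t_n \to 0$ to restore ergodicity while forcing the stationary mass $\pi(R) \to 1$.

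Starting from $s$, the chain deterministically walks the trunk, selects branch $i$ with probability $\alpha_i^{(n)}$, traverses it, and is absorbed at $R$. Conditional on branch $i$ the absorption time is a $\mathrm{Gamma}(T_n + \tau_i^{(n)} + 1,\,1)$ variable with mean $t_n + c_i^{(n)} w_n + O(1)$ and standard deviation $O(\sqrt{t_n}) = o(w_n)$, so the transition from ``not absorbed'' to ``at $R$'' is sharp on the $w_n$-scale around $t_n + c_i^{(n)} w_n$. Since $\pi(R) \to 1$, one checks directly that
\begin{equation*}
\dtv(\mu_{s,\,t_n+cw_n},\pi) \;=\; \sum_{i=1}^{k_n} \alpha_i^{(n)} \, \P\!\bigl( \mathrm{Gamma}(T_n+\tau_i^{(n)}+1,\,1) > t_n + cw_n \bigr) + o(1) \;\longrightarrow\; p_n(c).
\end{equation*}

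The main obstacle, and the very reason for the length-$\Theta(t_n)$ trunk, is to verify that $s$ is the worst starting state. From any other trunk state $s_j$ with $j \geq 1$ the absorption time is stochastically smaller than from $s$ by a shift of $j-1$, giving a distance bounded by $p_n(c - (j-1)/w_n) \leq p_n(c)$. From any branch state $y_{i,j}$ the absorption time is at most $\tau_i^{(n)} \leq t_n/2 + O(w_n)$, which is smaller than $t_n + cw_n$ by $\Theta(t_n) \to \infty$, so the chain has been at $R$ for a very long time by the observation time and its distance to $\pi$ tends to $0$. Without the trunk, each state $y_{i,1}$ would instead contribute the sharp indicator $\mathbf{1}[c < c_i^{(n)}]$ and the worst-case maximum would collapse the mixture $p_n(c)$ into the much coarser $\mathbf{1}[c < \max_i c_i^{(n)}]$, destroying the profile; the role of the trunk is precisely to manufacture the $\Theta(t_n)$ safety margin between $s$ and every other starting state that prevents this collapse. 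Once this worst-case bound is secured, one has $\mathrm{d}^{(n)}(t_n + cw_n) = p_n(c) + o(1)$, and a standard diagonal extraction in $n$ delivers $\mathrm{d}^{(n)}(t_n + cw_n) \to p(c)$ for every $c \in \bbR$.
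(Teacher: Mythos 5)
Your construction is essentially the FIT chain of the paper: a long trunk making the root the worst starting state, branches of lengths $t_n/2 + c_i^{(n)}w_n$ selected with probabilities equal to the increments of $p$ along a refining grid, and a tiny return rate concentrating $\pi$ at the sink; your use of $\sqrt{t_n}\lll w_n$ to absorb the $\mathrm{Gamma}$ fluctuations of the traversal time plays exactly the role that Poissonization plays in the paper (which first proves a uniform discrete-time statement and then transfers it to continuous time). The argument is correct, with one point you should make explicit: the grid range must grow slowly enough, say $|c_1^{(n)}|, c_{k_n}^{(n)} = o(t_n/w_n)$, so that every branch length $\tau_i^{(n)}$ is positive and is $t_n/2 + o(t_n)$ --- otherwise the longest branches would exceed the trunk, their interior states would not have reached $R$ by time $t_n + cw_n$, and the worst-case maximum would no longer be attained at $s$ (the paper enforces this via its explicit truncation at $v_n = w_n(t_n/w_n)^{1/8}\leq t_n/3$). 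The remaining glossed steps (the ``once at $R$, stays at $R$'' estimate needed to convert $\P_s(T_R > t)$ into $\dtv(\mu_{s,t},\pi)$ up to $O(\varepsilon_n t_n)$) are routine and mirror the paper's Proposition 3.2.
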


We may relax the continuity assumption in the definition of scaling triplets for existence. However, if it is relaxed, then the relation described in Remark \ref{rem: equivalence scaling triplets} is no longer an equivalence relation and sequences of Markov chains might exhibit non-trivial behaviours in different \textit{windows}. This contrasts with a recent result of Nestoridi \cite{Nestoridi2025continuityprofiles}, proving the existence of a unique scale and that the cutoff profile must be continuous, under some spectral conditions. We notably give an example of a sequence of Markov chains having a non-trivial behaviour in uncountably many \textit{windows} in Section \ref{s: A non-trivial behaviour in uncountably many windows}, and an example of a sequence of Markov chains having countably many \textit{nested windows} in Section \ref{s: Countably many nested windows}.

\medskip

We write $\bbN = \ag 0, 1, \ldots\ad$ and $\bbN^* = \bbN\backslash \ag 0 \ad = \ag 1,2, \ldots \ad$. For discrete time Markov chains we prove the following.

\begin{theorem} \label{thm: discrete time profiles intro}
    Let $(t_n)$ be a sequence of positive integers and $(w_n)$ be a sequence of positive real numbers such that $w_n = o(t_n)$ as $n\to \infty$. Let $p\du \bbR \to [0,1]$ be a weakly decreasing function (but not necessarily continuous) such that $p(x) \xrightarrow[x\to -\infty]{} 1$ and  $p(x) \xrightarrow[x\to +\infty]{} 0$. There exists a sequence of discrete time Markov chains $((X_t^{(n)})_{t\in \bbN})_{n\geq 1}$ such that
\begin{equation}
    \sup_{c\in \bbR \du t_n + cw_n \in \bbN} \bg \mathrm{d}^{(n)}(t_n + c w_n) - p(c) \bd \xrightarrow[n\to \infty]{} 0.
\end{equation}
\end{theorem}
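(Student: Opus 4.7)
The plan is to realize $p$ as a uniform limit of cutoff profiles of fruit-inosculated tree (FIT) Markov chains, by first approximating $p$ by step functions. Since $p$ is weakly decreasing from $1$ to $0$, for each $K \geq 1$ I choose quantile points $c_1 \leq \cdots \leq c_{K-1}$ defined by $c_k = \inf \ag c \in \bbR \du p(c) < 1 - k/K \ad$, and form the step approximation
\[
p_K(c) = \frac{1}{K} \sum_{k=1}^{K-1} \indic{c < c_k},
\]
which satisfies $\|p - p_K\|_\infty \leq 1/K$. It therefore suffices to construct, for each $K$ and each $n$, a discrete-time Markov chain whose distance to stationarity matches $p_K$ up to an $o(1)$ uniform error as $n \to \infty$; a diagonal argument $K = K_n \to \infty$ then produces a single sequence of chains with profile converging uniformly to $p$.

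To realize a given step profile $p_K$, I build a FIT chain with $K-1$ branches attached to a common ``fruit'' region. Branch $k$ is designed so that, in combination with the fruit's fast-mixing dynamics, the chain has a characteristic escape time of $\ell_k = \lfloor t_n + c_k w_n \rfloor$: before this time, a proportion $1/K$ of the probability mass emanating from the worst-case starting vertex is ``still walking'' along branch $k$ in a non-stationary configuration, while after it has returned to the fruit and mixed there. The weights $1/K$ are encoded in the stationary masses of the branches and in the reinjection probabilities from the fruit back to each branch. If this construction is well-tuned, then starting from the worst-case vertex the distribution at time $t = t_n + c w_n$ splits into an ``unmixed'' mass of $p_K(c) + o(1)$ and an ``approximately stationary'' mass of $1 - p_K(c) - o(1)$, giving $\mathrm{d}^{(n)}(t_n + c w_n) = p_K(c) + o(1)$ uniformly in $c$.

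The main obstacle is the detailed analysis of the FIT chain in step two. Since the distance to stationarity is a maximum over starting states, I must ensure that no state in the chain inflates $\mathrm{d}^{(n)}(t)$ beyond the target value $p_K(c)$. This is delicate: if the branches were purely deterministic paths, then an interior branch vertex would remain at TV distance close to $1$ throughout much of the window and destroy the multi-step shape of the profile. The role of the inosculated (fruit-joined) structure is precisely to intertwine the branch vertices with the fruit's fast-mixing transitions, so that no starting state produces a profile sharper or timed later than the target. Making this bookkeeping rigorous, uniformly in $c$ and in $n$ for each $K$, is the crux of the argument; once achieved, the diagonalization $K = K_n \to \infty$ concludes the proof.
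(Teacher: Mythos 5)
Your first step---replacing $p$ by the quantile step function $p_K$ and diagonalizing over $K=K_n\to\infty$---is sound and plays the same role as the paper's truncation of $p$ to a window of size $v_n$ with $w_n\lll v_n\lll t_n$ (in the paper the integer time grid then discretizes the values automatically; your quantile cutoffs at $c_1$ and $c_{K-1}$ supply the truncation, and one only has to merge branches whose rounded lengths $\lfloor t_n+c_kw_n\rfloor$ coincide, since the FIT definition requires strictly increasing $\ell_i$). The genuine gap is in the second and third paragraphs: the chain that is supposed to realize $p_K$ is never actually constructed or analyzed. You give no transition probabilities, no drift parameter, no trunk length, and you explicitly defer ``the crux of the argument''---that no starting state pushes $\mathrm{d}^{(n)}(t)$ above the target---to ``once achieved''. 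That deferred step is exactly the content of the paper's quantitative analysis (Lemmas 3.2--3.5 and Proposition 3.6, which give $|\mathrm{d}^{(\varepsilon)}(t)-F(t)|\leq C_3\varepsilon$ uniformly over \emph{all} times and implicitly over all starting states), and without it there is no proof.

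Moreover, the mechanism you gesture at for closing that gap is not the right one. In the FIT construction the weights $\rho_i=1/K$ are not ``encoded in the stationary masses of the branches'' (the stationary measure is concentrated at the fruit, $\pi_\varepsilon(z)\geq 1-2\varepsilon$), nor in the reinjection probabilities $\varepsilon\rho_i$ from the fruit (these are negligible); they are the branching probabilities $\eta\rho_i$ at the top of the trunk. And the worst-case-over-starting-states problem is not solved by ``intertwining branch vertices with the fruit's fast-mixing transitions'' but by a purely combinatorial length constraint: the trunk has length $\ell_0\geq\max_i\ell_i$ (equivalently $W(f)\leq L(f)$), so a vertex at depth $j$ on branch $i$ reaches the fruit, essentially deterministically when $\varepsilon$ is tiny, by time $\ell_0+\ell_i-j\leq\ell_0+\ell_1$, i.e.\ no later than the earliest time at which mass from the root arrives; hence the root is the worst starting state and every other state mixes earlier. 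Your construction does satisfy the needed hypothesis for large $n$ (the window width $(c_{K-1}-c_1)w_n$ is $o(t_n)$), but identifying and verifying this condition, together with the $O(\varepsilon)$ error bounds that make the chain ``almost deterministic'', is the substance of the theorem and is missing from the proposal.
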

   
\section{A tree whose branches merge at a single fruit}

In this section, we define a family of Markov chains whose parameters can be easily modified to exhibit various behaviours. The main idea is to define almost deterministic Markov chains by adding a strong drift, and to choose the length of the \textit{branches} so that the worst starting vertex is always the same, and so that the chain reaches the \textit{center of mass} with the desired probability at the desired time. These chains are defined below in Definition \ref{def: FIT Markov chains} and illustrated on Figure \ref{fig:FITexample}. They can be visualized as trees whose branches all join at a single fruit. We call these Markov chains fruit-inosculated-tree (FIT) Markov chains.\footnote{In botany, inosculation is the phenomenon of branches merging together.}

FIT Markov chains can be seen as a generalization of an example constructed by Aldous during the ARCC workshop “Sharp thresholds for mixing times” in 2004. The original document is available at \url{https://www.aimath.org/WWN/mixingtimes/mixingtimes.pdf}, and Aldous' example is described in \cite[Figure 18.2]{LivreLevinPeres2019MarkovChainsAndMixingTimesSecondEdition}. We thank Justin Salez for bringing this example to our attention. Other generalizations of Aldous' example can be found in \cite{HermonLacoinPeres2016, HermonPeres2018SensitivityMixingTimesCutoff}.

\medskip

First we define a set of parameters as follows. Recall that $\bbN^* = \ag 1, 2, \ldots \ad$.
\begin{definition}
   The set $\cP$ is the set of tuples $(k, \boldsymbol{\ell},\boldsymbol{\rho})$, where $k\geq 2$ is an integer, $\boldsymbol{\ell} = (\ell_0, \ell_1,..., \ell_k)\in (\bbN^*)^{k+1}$ satisfies $\ell_0 = \max_{0\leq i\leq k} \ell_i$ and $1\leq \ell_1<\ell_2<\ldots<\ell_k$, and $\boldsymbol{\rho} = (\rho_1, ..., \rho_k)\in (\bbR_+^*)^k$ satisfies $\sum_{i=1}^k\rho_i = 1$.
\end{definition}
 
Then we define the fruit-inosculated-tree state space.

\begin{definition}\label{def: FITstates}
    Let $(k, \boldsymbol{\ell},\boldsymbol{\rho}) \in \cP$. The fruit-inosculated-tree state space with parameters $(k, \boldsymbol{\ell},\boldsymbol{\rho})$, denoted by $\FITStates(k, \boldsymbol{\ell},\boldsymbol{\rho})$ is the set of the following states: 
\begin{itemize}
    \item $x_{j}$ for $0\leq j \leq \ell_0$,
    \item $y_{j}^i$ for $1\leq i \leq k$ and $\ell_0 + 1 \leq j \leq \ell_0 + \ell_i - 1$,
    \item $z$.
\end{itemize}
\end{definition}

Finally we define fruit-inosculated-tree Markov chains.

\begin{definition}\label{def: FIT Markov chains}
    Let $(k, \boldsymbol{\ell},\boldsymbol{\rho}) \in \cP$ and $\varepsilon\in (0,1/2)$. Set $\eta = 1-\varepsilon$. The fruit-inosculated-tree (FIT) Markov chain with parameters $(k, \boldsymbol{\ell},\boldsymbol{\rho},\varepsilon)$, denoted $\FIT(k, \boldsymbol{\ell},\boldsymbol{\rho},\varepsilon)$, is the Markov chain with state space $\FITStates(k, \boldsymbol{\ell},\boldsymbol{\rho})$ and transition matrix $P$ such that,
\begin{itemize}
    \item $P(x_{j},x_{j+1}) = \eta$ and $P(x_{j+1},x_{j}) = \varepsilon$ for $0\leq j \leq \ell_0-1$,
    \item $\bbP(x_0, x_0) = \varepsilon$ and $\bbP(z,z) = \eta$;
\end{itemize}
for $1\leq i\leq k$ such that $\ell_i \geq 2$:
\begin{itemize}
    \item $P(x_{\ell_0}, y_{\ell_0+1}^i) = \eta\rho_i$ and $P(y_{\ell_0+1}^i,x_{\ell_0}) = \varepsilon$,
    \item $P(y_{j}^i,y_{j+1}^i) = \eta $ and $P(y_{j+1}^i,y_{j}^i) = \varepsilon $ for $\ell_0+1 \leq j \leq \ell_0+\ell_i-2$,
    \item $P(y_{\ell_0 + \ell_i -1}^i, z) = \eta$ and $\bbP(z, y_{\ell_0 + \ell_i-1}^i) = \varepsilon\rho_i$;
\end{itemize}
and if $\ell_1 = 1$:
\begin{itemize}
    \item $P(x_{\ell_0} ,z) = \eta \rho_1$ and $P(z, x_{\ell_0}) = \varepsilon \rho_1$.
\end{itemize}

\end{definition}

  \begin{figure}[!ht]
\begin{center}
    \begin{tikzpicture}[->,>=stealth,shorten >=1pt,auto,node distance=3cm, thick, scale=1.05]
\tikzstyle{state} =[circle, draw, fill=white, inner sep=3pt, minimum width=10pt, scale = 1]

    \node[circle, draw, fill=brown, inner sep=3pt, minimum width=10pt, scale = 1] (0) at (0,0) {{\small $x_0$}};
    \node[circle, draw, fill=brown, inner sep=3pt, minimum width=10pt, scale = 1] (1) at (0,1) {{\small $x_1$}};
    \node[circle, draw, fill=brown, inner sep=3pt, minimum width=10pt, scale = 1] (2) at (0,2) {{\small $x_2$}};
    \node[circle, draw, fill=brown, inner sep=3pt, minimum width=10pt, scale = 1] (3) at (0,3) {{\small $x_3$}};
    \node[circle, draw, fill=brown, inner sep=3pt, minimum width=10pt, scale = 1] (4) at (0,4) {{\small $x_4$}};
    \node[circle, draw, fill=brown, inner sep=3pt, minimum width=10pt, scale = 1] (5) at (0,5) {{\small $x_5$}};
    \node[circle, draw, fill=brown, inner sep=3pt, minimum width=10pt, scale = 1] (6) at (0,6) {{\small $x_6$}};

    \node[circle, draw, fill=green, inner sep=3pt, minimum width=10pt, scale = 1] (11) at (-4.5,9) {{\small $y_{7}^1$}};
    \node[circle, draw, fill=green, inner sep=3pt, minimum width=10pt, scale = 1] (21) at (-1.5,9) {{\small $y_{7}^2$}};
    \node[circle, draw, fill=green, inner sep=3pt, minimum width=10pt, scale = 1] (22) at (-1.5,10) {{\small $y_{8}^2$}};
    \node[circle, draw, fill=green, inner sep=3pt, minimum width=10pt, scale = 1] (31) at (1.5,9) {{\small $y_{7}^3$}};
    \node[circle, draw, fill=green, inner sep=3pt, minimum width=10pt, scale = 1] (32) at (1.5,10) {{\small $y_{8}^3$}};
    \node[circle, draw, fill=green, inner sep=3pt, minimum width=10pt, scale = 1] (33) at (1.5,11) {{\small $y_{9}^3$}};
    \node[circle, draw, fill=green, inner sep=3pt, minimum width=10pt, scale = 1] (41) at (4.5,9) {{\small $y_{7}^4$}};
    \node[circle, draw, fill=green, inner sep=3pt, minimum width=10pt, scale = 1] (42) at (4.5,10) {{\small $y_{8}^4$}};
    \node[circle, draw, fill=green, inner sep=3pt, minimum width=10pt, scale = 1] (43) at (4.5,11) {{\small $y_{9}^4$}};
    \node[circle, draw, fill=green, inner sep=3pt, minimum width=10pt, scale = 1] (44) at (4.5,12) {{\small $y_{10}^4$}};
    \node[circle, draw, fill=green, inner sep=3pt, minimum width=10pt, scale = 1] (45) at (4.5,13) {{\small $y_{11}^4$}};

    \node[circle, draw, fill=red, inner sep=3pt, minimum width=40pt, scale = 1] (100) at (0,14) {$z$};

\draw[->, dashed, draw=gray, loop right] (0) edge node {} (0);

    \foreach \x in {0,1,2,3,4,5,21,31,32,41,42,43,44} \pgfmathtruncatemacro{\nextx}{\x+1}
    \draw[->, draw=blue] (\x) edge node {} (\nextx);

    \foreach \x in {0,1,2,3,4,5,21,31,32,41,42,43,44} \pgfmathtruncatemacro{\nextx}{\x+1}
    \draw[->, dashed, draw=gray, bend right=25] (\nextx) edge node {} (\x) ;

    \draw[->, draw=cyan] (6) edge node {\cyan{$\eta\rho_1$}} (11);
    \draw[->, draw=cyan] (6) edge node {\cyan{$\eta\rho_2$}} (21);
    \draw[->, draw=cyan] (6) edge node {\cyan{$\eta\rho_3$}} (31);
    \draw[->, draw=cyan] (6) edge node {\cyan{$\eta\rho_4$}} (41);

    \draw[->, dashed, draw=gray, bend right=25] (11) edge node {} (6);
    \draw[->, dashed, draw=gray, bend left=25] (21) edge node {} (6);
    \draw[->, dashed, draw=gray, bend left=25] (31) edge node {} (6);
    \draw[->, dashed, draw=gray, bend left=25] (41) edge node {} (6);

    \foreach \x in {11, 22,33,45}
    \draw[->, draw=blue] (\x) edge node {} (100);

    \draw[->, dashed, draw=cyan, bend right=25] (100) edge node {\cyan{$\varepsilon\rho_1$}} (11);
    \draw[->, dashed, draw=cyan, bend left=25] (100) edge node {\cyan{$\varepsilon\rho_2$}} (22);
    \draw[->, dashed, draw=cyan, bend left=25] (100) edge node {\cyan{$\varepsilon\rho_3$}} (33);
    \draw[->, dashed, draw=cyan, bend left=25] (100) edge node {\cyan{$\varepsilon\rho_4$}} (45);
    \draw[->, draw=blue, loop left] (100) edge node {} (100);
\end{tikzpicture}
\end{center}
\caption{Fruit-inosculated-tree chain $\FIT(k, \boldsymbol{\ell},\boldsymbol{\rho},\varepsilon)$ with $k=4$ branches of lengths $\boldsymbol{\ell} = (2, 3, 4, 6)$. $\boldsymbol{\rho}$ and $\varepsilon$ are not specified. To lighten the figure, we represent transition rates $\eta=1-\varepsilon$ by continuous blue arrows, and transition rates $\varepsilon$ by dashed grey arrows. Other rates are represented in cyan.}
    \label{fig:FITexample}
\end{figure}
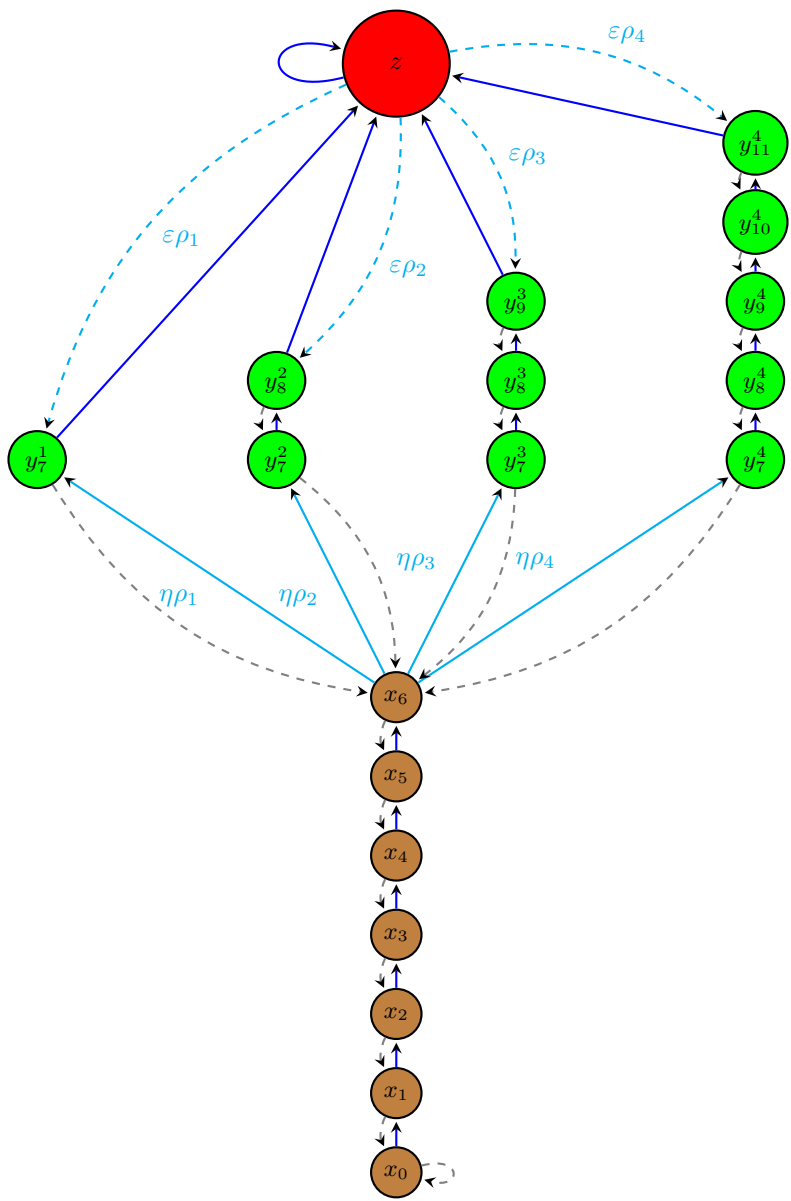

\begin{remark}
Intuitively, $x_0$ corresponds to the root, the $x_j$ correspond to the trunk, for each $i$ the $y_j^i$ correspond to the $i$-th branch, and $z$ corresponds to the unique fruit. Moreover the index $j$ of the $x_j$ or $y_j^i$ corresponds to the distance to the root $x_0$.
\end{remark}

\section{Properties of FIT Markov chains}

\subsection{Hitting and staying at the fruit}
Let $(X_t)_{t\geq 0}$ be an irreducible and aperiodic (discrete time) Markov chain on a finite state space $S$. The hitting time of a state $x\in S$ is the first time the chain reaches $x$: 
\begin{equation}
    T_x = \min \ag t\geq 0 \mid X_t = x \ad.
\end{equation}
The first return time to a vertex $x$ is defined similarly by $T_x^+ = \min \ag t\geq 1 \mid X_t = x \ad$.
We denote the probability and expectation of the chain conditioned on starting at a state $x\in S$ by $\bbP_x$ and $\bbE_x$, and if $\mu$ is a probability measure on $S$ we set $\bbP_\mu = \sum_{x\in S} \mu(x) \bbP_x$ and $\bbE_\mu = \sum_{x\in S} \mu(x) \bbE_x$.

\begin{lemma}\label{lem: hitting time the fruit bounded in expectation}
Let $(k, \boldsymbol{\ell},\boldsymbol{\rho}) \in \cP$. Let $\varepsilon\in (0,1/3)$. Let $(X_t)_{t\geq 0} = (X_t^{(\varepsilon)})_{t\geq 0} $ be the FIT Markov chain with parameters $(k, \boldsymbol{\ell},\boldsymbol{\rho}, \varepsilon)$. There exists a constant $C_0 = C_0(\ell_0)$ such that 
\begin{equation}
    \max_{x\in S} \bbE_x \cg T_z \cd \leq C_0.
\end{equation}
\end{lemma}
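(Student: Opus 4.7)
The plan is to apply the Foster--Lyapunov drift criterion: I will exhibit a nonnegative function $V$ on the state space $\FITStates(k,\boldsymbol{\ell},\boldsymbol{\rho})$ with $V(z) = 0$, $\max_x V(x)$ bounded in terms of $\ell_0$ only, and $\bbE_x[V(X_1)] - V(x) \leq -\delta$ for every $x \neq z$, with some uniform $\delta > 0$. A standard optional-stopping argument applied to the supermartingale $V(X_{T_z \wedge t}) + \delta(T_z \wedge t)$, followed by monotone convergence, then yields $\bbE_x[T_z] \leq V(x)/\delta$, which is exactly the content of the lemma.

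The tempting choice $V(x) = d(x, z)$ (graph distance to $z$) fails at the branching vertex $x_{\ell_0}$: stepping into the base $y^i_{\ell_0+1}$ of branch $i$ changes the distance to $z$ by $\ell_i - 1 - \ell_1$, which is positive whenever $\ell_i > \ell_1$, and can make the expected drift of $d(\cdot, z)$ positive at $x_{\ell_0}$. Instead, I will use the \emph{index-based} Lyapunov function $V(x_j) = 2\ell_0 - j$ for $0 \leq j \leq \ell_0$ and $V(y^i_j) = 2\ell_0 - j$ for every branch vertex, together with $V(z) = 0$. Since $\ell_0 = \max_{0 \leq i \leq k} \ell_i$ by definition of $\cP$, we have $\ell_0 + \ell_i - 1 \leq 2\ell_0 - 1$, so $V \geq 1$ on every non-$z$ state and $\max V = V(x_0) = 2\ell_0$. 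The key virtue is that \emph{any} step increasing the index $j$ by one decreases $V$ by exactly one, regardless of which branch is entered from $x_{\ell_0}$.

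I will then verify the drift bound by a short case analysis. At an interior trunk vertex $x_j$ ($1 \leq j \leq \ell_0 - 1$) or interior branch vertex $y^i_j$, the chain is a pure $(\eta, \varepsilon)$-biased walk, giving drift exactly $2\varepsilon - 1$. At $x_{\ell_0}$ (when $\ell_1 \geq 2$), the total probability of moving up to some $y^i_{\ell_0+1}$ is $\eta \sum_i \rho_i = \eta$, each decreasing $V$ by $1$, while with probability $\varepsilon$ the chain moves to $x_{\ell_0-1}$, increasing $V$ by $1$; this again gives drift $2\varepsilon - 1$. The case $\ell_1 = 1$ is even more favourable, since a direct jump to $V(z) = 0$ drops $V$ by more. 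At a branch tip $y^i_{\ell_0+\ell_i-1}$, the downward neighbour has $V$-value $\ell_0 - \ell_i + 2$ (or $V(x_{\ell_0}) = \ell_0$ if $\ell_i = 2$) and the upward transition lands at $V(z) = 0$; a brief computation yields drift $-(1-\varepsilon)(\ell_0 - \ell_i) + (2\varepsilon - 1) \leq 2\varepsilon - 1$ using $\ell_i \leq \ell_0$. Finally, at the reflecting vertex $x_0$ the drift equals $-\eta \leq 2\varepsilon - 1$. Taking $\delta = 1 - 2\varepsilon \geq 1/3$ (using $\varepsilon < 1/3$) gives $\bbE_x[T_z] \leq 2\ell_0/(1 - 2\varepsilon) \leq 6\ell_0$, so $C_0(\ell_0) = 6\ell_0$ works.

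The only conceptual step is the choice of Lyapunov function: switching from $d(\cdot, z)$ to the index-based $V$ is precisely what is needed to absorb the branching at $x_{\ell_0}$ uniformly in $\boldsymbol{\rho}$ and in the branch lengths. The remaining computations are a routine case check driven directly by Definition~\ref{def: FIT Markov chains}.
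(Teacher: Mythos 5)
Your proof is correct, but it takes a genuinely different route from the paper. The paper's argument is a one-step uniform minorization: from any state there is an ``always go up'' trajectory reaching $z$ within $\ell = \ell_0+\ell_k \leq 2\ell_0$ steps, and summing over branches gives $P^\ell(x,z)\geq \eta^\ell \geq (2/3)^\ell$; hence $T_z$ is stochastically dominated by $\ell$ times a geometric variable, giving a constant $C_0$ that is exponential in $\ell_0$ (which is all the lemma needs, since only the dependence on $\ell_0$ matters). Your Foster--Lyapunov argument is slightly longer but quantitatively sharper: the index-based function $V(x_j)=V(y^i_j)=2\ell_0-j$, $V(z)=0$ has uniform drift at most $2\varepsilon-1\leq -1/3$ at every non-$z$ state (your case check is right, including the observation that the naive choice $d(\cdot,z)$ fails at the branch point $x_{\ell_0}$, and the tip computation using $\ell_i\leq\ell_0$), so optional stopping gives the linear bound $\bbE_x[T_z]\leq 6\ell_0$. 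Your approach buys a much better constant and isolates exactly why the chain is ``almost deterministic''; the paper's buys brevity and requires no cleverness in choosing an auxiliary function. Both are complete proofs of the stated lemma.
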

\begin{proof}
    Denote the transition matrix of the chain by $P$. Set $\eta= 1-\varepsilon$ as before, and set $\ell = \ell_0 + \ell_k$. Denote the probability, starting at a state $x$, to be at $z$ after $\ell$ steps, by $P^\ell(x,z)$. Then $P^\ell(x,z) \geq \eta^\ell \geq (2/3)^\ell =: \delta(\ell)$, and therefore $\max_{x\in S} \bbP_x(T_z > \ell) \leq 1-\delta(\ell)$. It follows that no matter what the starting point is, $T_z$ is stochastically dominated by $\ell$ times a geometric random variable with success probability $\delta(\ell)$, and hence its expectation is upper bounded by a constant which depends only on $\ell$, i.e.\ only on $\ell_0$ since $\ell \leq 2\ell_0$ by definition.
\end{proof}

\begin{proposition}\label{prop: hitting time of z and concentration of the stationary measure}
Let $(k, \boldsymbol{\ell},\boldsymbol{\rho}) \in \cP$. There exists $\varepsilon_0>0$ such that the following holds. Let $\varepsilon\in (0,\varepsilon_0)$. Consider the FIT Markov chain with parameters $(k, \boldsymbol{\ell},\boldsymbol{\rho}, \varepsilon)$, and denote its stationary distribution by $\pi_\varepsilon$.
    \begin{enumerate}
    \item We have $\bbE_z \cg T_z^+ \cd \leq 1+2\varepsilon$.
    \item We have $\pi_\varepsilon(z) \geq 1- 2\varepsilon$.
    \item For any fixed $t\geq 0$, we have $\bbP_z(X_t = z) \geq 1-4\varepsilon$.
    \item For any state $v$, and any times $t_1\leq t_2$, we have $\bbP_v(X_{t_2} = z) \geq \bbP_v(T_z \leq t_1) - 4\varepsilon$.
    \end{enumerate}
\end{proposition}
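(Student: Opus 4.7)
The plan is to prove the four items in order, with each one building on its predecessors. For Part~(1), I would condition on the first step of the chain from $z$: with probability $\eta$ the chain remains at $z$ and $T_z^+ = 1$; with probability $\varepsilon$ it lands at a random neighbor $v_\xi$ (where $\xi\sim\boldsymbol{\rho}$), giving $T_z^+ = 1 + T_z(v_\xi)$. This yields
\[
\bbE_z[T_z^+] = 1 + \varepsilon \sum_{i=1}^k \rho_i \, \bbE_{v_i}[T_z],
\]
so the task reduces to bounding the weighted sum $\bar H := \sum_i \rho_i \bbE_{v_i}[T_z]$ by $2$ once $\varepsilon$ is small. For any branch of length $\ell_i \geq 2$, the state $v_i$ is the top vertex $y_{\ell_0+\ell_i-1}^i$ of branch $i$, so conditioning again on the first step gives $\bbE_{v_i}[T_z] = 1 + \varepsilon\, \bbE_{v_i'}[T_z]$ for some interior vertex $v_i'$; invoking Lemma~\ref{lem: hitting time the fruit bounded in expectation} then yields $\bbE_{v_i}[T_z] \leq 1 + \varepsilon C_0$. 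Assembling these contributions (with additional care for $v_1 = x_{\ell_0}$ when $\ell_1 = 1$) and choosing $\varepsilon_0$ sufficiently small depending on $(k,\boldsymbol{\ell},\boldsymbol{\rho})$ gives $\bar H \leq 2$, hence $\bbE_z[T_z^+] \leq 1 + 2\varepsilon$. Part~(2) is then immediate from the classical identity $\pi_\varepsilon(z) = 1/\bbE_z[T_z^+]$, giving $\pi_\varepsilon(z) \geq 1/(1+2\varepsilon) \geq 1 - 2\varepsilon$.

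For Part~(3), the intuition is that starting from $z$ the chain is seldom away from $z$: each step at $z$ triggers an excursion with probability $\varepsilon$, and by Part~(1) the excursion has mean length at most $\bar H \leq 2$. Exploiting the renewal structure at $z$, I would decompose according to the last visit to $z$ before time $t$ to obtain
\[
\bbP_z(X_t \neq z) = \sum_{s=0}^{t-1} \bbP_z(X_s = z) \cdot \varepsilon \cdot \bbP_{v_\xi}(T_z > t-s-1).
\]
Bounding $\bbP_z(X_s = z) \leq 1$ and summing using $\sum_{k \geq 0} \bbP_{v_\xi}(T_z > k) = \bbE_{v_\xi}[T_z] = \bar H$ shows $\bbP_z(X_t \neq z) \leq \varepsilon \bar H \leq 4\varepsilon$, uniformly in $t$.

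Finally, for Part~(4), I would apply the strong Markov property at $T_z$. Conditional on $T_z = s$ with $s \leq t_1 \leq t_2$, the chain $(X_{s+k})_{k\geq 0}$ is a fresh copy started at $z$, so $\bbP_v(X_{t_2} = z \mid T_z = s) = \bbP_z(X_{t_2-s} = z) \geq 1 - 4\varepsilon$ by Part~(3). Summing over $s \in \{0,\ldots,t_1\}$ and using $\bbP_v(T_z \leq t_1) \leq 1$,
\[
\bbP_v(X_{t_2} = z) \geq \bbP_v(T_z \leq t_1)(1 - 4\varepsilon) \geq \bbP_v(T_z \leq t_1) - 4\varepsilon.
\]
The delicate point throughout is Part~(1): sharpening the crude bound $C_0$ of the preceding lemma into the claimed constant $2$ requires careful iteration of the first-step expansion along each branch, and in particular extra bookkeeping in the degenerate case $\ell_1 = 1$, where $v_1 = x_{\ell_0}$ is the junction rather than the top of a true branch.
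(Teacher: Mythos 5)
There is a genuine gap, and it sits exactly where you flag ``additional bookkeeping'': the case $\ell_1=1$. Your Part~(1) reduces to showing $\bar H=\sum_i\rho_i\,\bbE_{v_i}[T_z]\le 2$, but when $\ell_1=1$ the neighbour $v_1$ is the junction $x_{\ell_0}$, and $\bbE_{x_{\ell_0}}[T_z]\to\sum_i\rho_i\ell_i$ as $\varepsilon\to0$ (from $x_{\ell_0}$ the chain picks branch $i$ with probability $\approx\rho_i$ and then needs $\ell_i$ steps). Hence $\bar H\to\rho_1\sum_i\rho_i\ell_i+(1-\rho_1)$, which is not bounded by $2$: for $k=2$, $\rho_1=\rho_2=1/2$, $\ell_1=1$, $\ell_2\ge6$ one gets $\bar H\to\tfrac34+\tfrac{\ell_2}{4}>2$, and then $\bbE_z[T_z^+]=1+\varepsilon\bar H>1+2\varepsilon$ for all small $\varepsilon$. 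So no choice of $\varepsilon_0$ rescues the constant $2$ by your route. You should know that the paper's own proof stumbles on the same configuration (it asserts $\bbP_z(T_z^+\ge3)=\varepsilon^2$, which holds only when every neighbour of $z$ returns with probability $\eta$, i.e.\ when $\ell_1\ge2$; from $x_{\ell_0}$ the one-step return probability is only $\eta\rho_1$). The repair is the same in both cases and harmless for the rest of the paper: your bound $\bbE_{x_{\ell_0}}[T_z]\le C_0$ from Lemma~\ref{lem: hitting time the fruit bounded in expectation} gives $\bar H\le 1+\rho_1C_0+\varepsilon C_0$, hence $\bbE_z[T_z^+]\le1+C\varepsilon$ and $\pi_\varepsilon(z)\ge1-C\varepsilon$ with $C=C(k,\boldsymbol{\ell},\boldsymbol{\rho})$, which is all that Lemma~\ref{lem: probability to be at the fruit at a given time} and Proposition~\ref{prop: total variation estimate explicit with epsilon} actually need. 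State the proposition with such a $C$ rather than with $2$ and $4$.

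Modulo that constant, your argument is sound and largely parallels the paper's: Part~(2) is the same Kac-formula step, and Part~(4) is the same strong-Markov argument. Your Part~(3) takes a genuinely different and slightly more elementary route: a last-regeneration decomposition giving $\bbP_z(X_t\ne z)\le\varepsilon\sum_{m\ge0}\bbP_{v_\xi}(T_z>m)=\varepsilon\bar H$ uniformly in $t$. The paper instead deduces $\dtv(\delta_z,\pi_\varepsilon)\le2\varepsilon$ from Part~(2), then uses contraction of total variation under the kernel together with stationarity to get $\bbP_z(X_t=z)\ge\pi_\varepsilon(z)-2\varepsilon$. Both are valid; your renewal identity is self-contained and makes the dependence on $\bar H$ explicit, while the paper's is shorter given Part~(2). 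Your decomposition is correct (the events ``last visit to $z$ before time $t$ is at time $s$'' partition $\{X_t\ne z\}$), so once the constant in Part~(1) is fixed the whole proof goes through.
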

\begin{proof}
    \begin{enumerate}
        \item We have $\bbP_z(T_z^+ = 1) = \eta=1-\varepsilon$ and $\bbP_z(T_z^+ = 2) = \varepsilon\eta \leq \varepsilon$. Moreover, by Lemma \ref{lem: hitting time the fruit bounded in expectation}, we have $\max_{x\in S} \bbE_x \cg T_z \cd \leq C_0$ for some constant $C_0$ depending only on $\ell_0$. By the law of total expectation and since $\bbP_z(T_z^+ \geq 3) = \varepsilon^2$, we therefore get
        \begin{equation}
        \begin{split}
             \bbE_z\cg T_z^+ \cd 
            & = \bbP_z(T_z^+ = 1) + 2\bbP_z(T_z^+ = 2) + \bbE_z \cg T_z^+ \mid T_z^+ \geq 3\cd \bbP_z(T_z^+ \geq 3) \\
            & \leq (1-\varepsilon) + 2\varepsilon + \varepsilon^2(2 +\max_{x\in S} \bbE_x \cg T_z \cd) \\
            & \leq 1 + \varepsilon(1+ \varepsilon(2+C_0)),
        \end{split}
        \end{equation}
        which is less than $1+2\varepsilon$ for $\varepsilon$ small enough.
        \item By a classical property of Markov chains (\cite[Proposition 1.19]{LivreLevinPeres2019MarkovChainsAndMixingTimesSecondEdition}), we have $\pi_\varepsilon(z) = 1/\bbE_z\cg T_z^+ \cd$. Moreover by (a), $\bbE_z\cg T_z^+ \cd \leq 1+ 2\varepsilon \leq 1/(1-2\varepsilon)$. We conclude that $\pi_\varepsilon(z) \geq 1-2\varepsilon$.
\item By (b) we have $\dtv(\delta_z, \pi_\varepsilon) \leq 2\varepsilon$, where $\delta_z$ denotes the Dirac measure at $z$. Therefore
\begin{equation}
    \bbP_z(X_t = z) \geq \bbP_{\pi_\varepsilon}(X_t = z) - 2\varepsilon = \pi_{\varepsilon}(z) - 2\varepsilon \geq 1- 4\varepsilon.
\end{equation}
\item This follows immediately from (c) and the strong Markov property.
    \end{enumerate}
\end{proof}
\begin{remark}
    The bound $\pi_\varepsilon(z) \geq 1- 2\varepsilon$ in Proposition \ref{prop: hitting time of z and concentration of the stationary measure} (b) shows that if $\varepsilon$ is small then the stationary measure $\pi_\varepsilon$ is concentrated at $z$.
\end{remark}

\subsection{Hitting the fruit at specific times}

We now estimate the hitting time of the fruit $z$ starting from any state.

\begin{lemma}\label{lem: probability to reach the fruit at a given time}
    Let $(k, \boldsymbol{\ell},\boldsymbol{\rho}) \in \cP$. There exist $C_1>0$ and $\varepsilon_1>0$ such that the following holds. Let $\varepsilon\in (0,\varepsilon_1)$. Let $(X_t)_{t\geq 0} = (X_t^{(\varepsilon)})_{t\geq 0} $ be the FIT Markov chain with parameters $(k, \boldsymbol{\ell},\boldsymbol{\rho}, \varepsilon)$. Denote its stationary distribution by $\pi_\varepsilon$. 
    \begin{enumerate}
        \item Let $1\leq i\leq k$ and $\ell_0+1 \leq j \leq \ell_0 + \ell_i-1$. We have
\begin{equation}
    \bbP_{y_j^i} (T_z = \ell_0 + \ell_i - j) \geq 1- C_1\varepsilon.
\end{equation}
In particular, $\bbP_{y_j^i} (T_z \geq \ell_0) \leq C_1\varepsilon$.
        \item Let $1\leq i\leq k$ and $0\leq j \leq \ell_0$. We have
\begin{equation}
    \rho_i(1-C_1\varepsilon) \leq \bbP_{x_j} (T_z = \ell_0 - j +\ell_i) \leq \rho_i(1+C_1\varepsilon).
\end{equation}
    \end{enumerate}
\end{lemma}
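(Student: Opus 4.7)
The core observation is that for small $\varepsilon$ the chain has an overwhelming drift towards $z$: at every state the backward transitions (namely down in the trunk, down in a branch, the self-loop at $x_0$, and the moves leaving $z$) have total conditional probability exactly $\varepsilon$, while the forward ones sum to $\eta$. The plan is to introduce the good event
\begin{equation*}
    G_L = \ag \text{no backward step occurs during times } 0,1,\ldots,L-1\ad,
\end{equation*}
and to show, by a straightforward union bound over steps, that $\bbP_v(G_L^c) \leq L\varepsilon$ for every starting state $v$. In both parts of the lemma the target time $L$ satisfies $L \leq 2\ell_0$, so this contribution is already an $O(\varepsilon)$ error.

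For part (a), I set $v = y_j^i$ and $L = \ell_0 + \ell_i - j$. On the event $G_L$ the trajectory is deterministic: the chain marches straight up branch $i$ and lands on $z$ at time $L$. Therefore $\bbP_{y_j^i}(T_z = L, G_L) = \eta^L$, and hence $\bbP_{y_j^i}(T_z = L) \geq \eta^L \geq 1 - L\varepsilon$ by Bernoulli's inequality, yielding the desired bound with $C_1 := 2\ell_0$. The ``in particular'' statement follows at once from $j \geq \ell_0+1 > \ell_i$, which forces $L < \ell_0$, so that $\{T_z \geq \ell_0\} \subset \{T_z \neq L\}$ has probability at most $1 - \eta^L \leq C_1\varepsilon$.

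For part (b), I set $v = x_j$ and $L = \ell_0 - j + \ell_i$. On $G_L$ the chain ascends the trunk, selects some branch $i'$ at $x_{\ell_0}$ with conditional probability $\rho_{i'}$ (the branching probability $\eta\rho_{i'}$ renormalized by $\eta$), ascends that branch, reaches $z$ at time $\ell_0 - j + \ell_{i'}$, and then rests at $z$ until step $L$. Consequently the event $\{T_z = L\} \cap G_L$ coincides with $\{i' = i\}$, using the strict ordering $\ell_1 < \cdots < \ell_k$ built into $\cP$ to turn $\ell_{i'} = \ell_i$ into $i' = i$. This gives $\bbP_{x_j}(T_z = L, G_L) = \rho_i \eta^L$, hence the sandwich
\begin{equation*}
    \rho_i \eta^L \;\leq\; \bbP_{x_j}(T_z = L) \;\leq\; \rho_i \eta^L + \bbP_{x_j}(G_L^c) \;\leq\; \rho_i + 2\ell_0 \varepsilon.
\end{equation*}
Rewriting the left-hand side as $\rho_i(1 - 2\ell_0\varepsilon)$ via Bernoulli and the right-hand side as $\rho_i\bigl(1 + (2\ell_0/\rho_i)\varepsilon\bigr)$ finishes the proof with $C_1 := 2\ell_0/\min_{i'} \rho_{i'}$, and with $\varepsilon_1$ taken small enough (say, less than $1/C_1$ and than the $\varepsilon_0$ of Proposition~\ref{prop: hitting time of z and concentration of the stationary measure}).

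The only mild subtlety, rather than a genuine obstacle, is the asymmetry in the upper bound of (b): for $i \geq 2$ the event $\{T_z = L\}$ also contains non-straight trajectories (for instance, going via branch $1$ together with a compensating backtrack or self-loop at $x_0$ adding $\ell_i - \ell_1$ extra steps), but all of these are automatically absorbed into $G_L^c$, whose probability is bounded uniformly by $2\ell_0 \varepsilon$ without needing to enumerate such paths explicitly.
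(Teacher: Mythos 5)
Your proof is correct and follows essentially the same route as the paper: both arguments rest on the union bound that a backward step occurs within $L\leq 2\ell_0$ steps with probability at most $2\ell_0\varepsilon$, so the trajectory is deterministic up to the branch selection at $x_{\ell_0}$. The only cosmetic difference is the upper bound in (b), which the paper derives from the lower bounds for the other branches via $\sum_{i'}\bbP_{x_j}(T_z=\ell_0-j+\ell_{i'})\leq 1$, whereas you decompose directly over the good event $G_L$; both are valid and give the same constants up to rescaling.
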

\begin{proof}
    Recall that by definition $\ell_1<\ldots<\ell_k$ and $\rho_1+\ldots+\rho_k = 1$. 
    \begin{enumerate}
        \item First there is no path from $y_j^i$ to $z$ of length $\ell_0 + \ell_i - j-1$ or less, and a single one of length $\ell_0 + \ell_i - j$. To be explicit we have 
    \begin{equation}
        \bbP_{y_j^i} (T_z = \ell_0 + \ell_i - j) = \bbP(X_0 = y_j^i, X_1 = y_{j+1}^i,\ldots, X_{\ell_0 + \ell_i - j - 1} = y_{\ell_0 + \ell_i-1}^i, X_{\ell_0 + \ell_i - j} = z).
    \end{equation}
Therefore by a union bound (for the probability of the complement) we have 
\begin{equation}
    \bbP_{y_j^i} (T_z = \ell_0 + \ell_i - j) \geq 1 - (\ell_0 +\ell_i-j)\varepsilon \geq 1 - \ell \varepsilon,
\end{equation}
where $\ell = \ell_0 + \ell_k \leq 2\ell_0$.
\item Proceeding as in (a), the probability to always go up in the tree until reaching $z$, and once at $x_{\ell_0}$ to jump next to $y_{\ell_0 + 1}^i$ (or directly to $z$ if $\ell_i = 1$) is at least $\rho_i(1-\ell \varepsilon)$. The upper bound then follows from the lower bound (that we use for all $i'\ne i$): since by definition $\ell_1,\ldots,\ell_k$ are distinct integers and $\rho_1 + \ldots + \rho_k = 1$, we have
\begin{equation}
\begin{split}
      \bbP_{x_j} (T_z = \ell_0 + \ell_i - j)  & \leq 1 - \sum_{1\leq i'\leq k \du i'\ne i} \bbP_{x_j} (T_z = \ell_0 + \ell_{i'} - j) \\
      & \leq (1 -\ell\varepsilon + \ell \varepsilon) - \sum_{1\leq i'\leq k \du i'\ne i} \rho_{i'}(1-\ell\varepsilon) \\
      & = \rho_i(1-\ell\varepsilon) + \ell\varepsilon \\
      & \leq \rho_i + \ell\varepsilon \\
      & \leq \rho_i\pg 1 + (\min_{1\leq i\leq k}\rho_i)^{-1}\ell \varepsilon\pd. 
\end{split}
\end{equation}
\end{enumerate}
\end{proof}

\subsection{Estimates on the distance to stationarity}

Let $(k, \boldsymbol{\ell},\boldsymbol{\rho}) \in \cP$.  For $1\leq i\leq k$ set $t_i = \ell_0 + \ell_i$. Note that since $\ell_1<\ldots<\ell_k$, the $t_1, \ldots, t_k$ are all distinct. We define auxiliary functions $g = g_{(k, \boldsymbol{\ell},\boldsymbol{\rho})}$ from $\bbN = \ag 0, 1, \ldots \ad$ to $[0,1)$,  and $G = G_{(k, \boldsymbol{\ell},\boldsymbol{\rho})}$ from $\bbR_+$ to $[0,1]$, as follows. We set $g(t_i) = \rho_i$ for $1\leq i\leq k$, and $g(t) = 0$ for $t\in \bbN \backslash \ag t_1,\ldots,t_k\ad$; and for $x\in \bbR_+$,
\begin{equation}
    G(x) = \sum_{t = 0}^{\lf x \rf} g(t).
\end{equation}
We also set $F = F_{(k, \boldsymbol{\ell},\boldsymbol{\rho})} = 1 - G_{(k, \boldsymbol{\ell},\boldsymbol{\rho})}$.
Intuitively, the function $g$ corresponds to the hitting distribution of the fruit $z$ starting at the root $x_0$, if we took $\varepsilon = 0$ (that is if the walk had a fully deterministic behaviour), and $G$ is the associated cumulative distribution function (for the probability to be at $z$). Moreover, by definition, $G(x) = 0$ for $1\leq x \leq \ell_0$.

Let also $\varepsilon\in (0,1/2)$. Let $(X_t^{(\varepsilon)})_{t\geq 0} $ be the FIT Markov chain with parameters $(k, \boldsymbol{\ell},\boldsymbol{\rho}, \varepsilon)$. Denote again its stationary distribution by $\pi_\varepsilon$ and its state space by $S$. For $x\in S$ and $t\geq 0$, denote the law of $X_t^{(\varepsilon)}$ conditioned on $\ag X_0^{(\varepsilon)} = x \ad$ by $ \mu_{x,t}^{(\varepsilon)}$, and set
\begin{equation}
    \mathrm{d}_x^{(\varepsilon)}(t) = \dtv \pg \mu^{(\varepsilon)}_{x,t}, \pi_\varepsilon \pd.
\end{equation}
Finally, define for $t\geq 0$ the worst case total variation mixing time by
\begin{equation}
    \mathrm{d}^{(\varepsilon)}(t) = \max_{x\in S} \mathrm{d}_x^{(\varepsilon)}(t).
\end{equation}

Let us start by estimating the probability to be at the fruit at specific times starting from specific states.

\begin{lemma}\label{lem: probability to be at the fruit at a given time}
    Let $(k, \boldsymbol{\ell},\boldsymbol{\rho}) \in \cP$. There exist $C_2>0$ and $\varepsilon_2>0$ such that the following holds. Let $\varepsilon\in (0,\varepsilon_2)$. Let $(X_t)_{t\geq 0} = (X_t^{(\varepsilon)})_{t\geq 0} $ be the FIT Markov chain with parameters $(k, \boldsymbol{\ell},\boldsymbol{\rho}, \varepsilon)$. Denote its stationary distribution by $\pi_\varepsilon$.
    \begin{enumerate}
        \item Let $1\leq i\leq k$ and $\ell_0+1 \leq j \leq \ell_0 + \ell_i-1$. Let $t\geq \ell_0$. We have
\begin{equation}
    \bbP_{y_j^i} (X_t = z) \geq 1- C_2\varepsilon.
\end{equation}
\item Let $0\leq j \leq \ell_0$. Let $t\geq 0$. We have
\begin{equation}
    \bg \bbP_{x_j} (X_t = z) - \bbP_{x_0} (X_{t+j} = z) \bd \leq C_2 \varepsilon.
\end{equation}
\item Let $t\geq \ell_0 + 1$. We have
\begin{equation}
    \bg \bbP_{x_0} (X_t = z) - G(t) \bd \leq C_2\varepsilon.
\end{equation}
    \end{enumerate}
\end{lemma}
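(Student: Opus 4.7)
The plan is to reduce all three parts to combining the hitting-time estimates of Lemma~\ref{lem: probability to reach the fruit at a given time} with the ``once at $z$, likely still at $z$'' control from Proposition~\ref{prop: hitting time of z and concentration of the stationary measure}(d), which gives $\bbP_v(X_{t_2}=z) \geq \bbP_v(T_z \leq t_1) - 4\varepsilon$ whenever $t_1 \leq t_2$. The constant $C_2$ will depend on $(k, \boldsymbol{\ell}, \boldsymbol{\rho})$ only through $\ell_0$ and $\min_i \rho_i$.

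For part (a), the ``in particular'' clause of Lemma~\ref{lem: probability to reach the fruit at a given time}(a) gives $\bbP_{y_j^i}(T_z \leq \ell_0) \geq 1 - C_1\varepsilon$ (since $\ell_0+\ell_i-j \leq \ell_0-1$ when $j \geq \ell_0+1$), and applying Proposition~\ref{prop: hitting time of z and concentration of the stationary measure}(d) with $t_1 = \ell_0 \leq t = t_2$ finishes the estimate in one line. For part (b), I would condition on the position at time $j$ starting from $x_0$: writing
\[
\bbP_{x_0}(X_{t+j} = z) = \sum_{x \in S} \bbP_{x_0}(X_j = x)\,\bbP_x(X_t = z),
\]
isolating the term $x = x_j$, using the drift bound $\bbP_{x_0}(X_j = x_j) \geq \eta^j \geq 1 - \ell_0\varepsilon$, and estimating each remaining $\bbP_x(X_t = z)$ by $1$ yields $|\bbP_{x_0}(X_{t+j} = z) - \bbP_{x_j}(X_t = z)| \leq 2\ell_0\varepsilon$.

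For part (c), the plan is two-step. First I would approximate $\bbP_{x_0}(X_t = z)$ by $\bbP_{x_0}(T_z \leq t)$ within $4\varepsilon$: the trivial inclusion $\{X_t = z\} \subseteq \{T_z \leq t\}$ gives the upper direction, and Proposition~\ref{prop: hitting time of z and concentration of the stationary measure}(d) with $t_1 = t_2 = t$ gives the lower direction. Then I would approximate $\bbP_{x_0}(T_z \leq t)$ by $G(t)$, decomposing
\[
\bbP_{x_0}(T_z \leq t) = \sum_{i :\, t_i \leq t} \bbP_{x_0}(T_z = t_i) + R_t, \qquad R_t := \sum_{s \leq t,\, s \notin \{t_1,\ldots,t_k\}} \bbP_{x_0}(T_z = s).
\]
Lemma~\ref{lem: probability to reach the fruit at a given time}(b) with $j=0$ controls the main sum within $C_1\varepsilon$ of $G(t) = \sum_{i:\, t_i \leq t} \rho_i$. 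For the residual $R_t$, the trick is that we do not need to enumerate ``detour'' paths: the same lemma gives $\sum_{i=1}^k \bbP_{x_0}(T_z = t_i) \geq 1 - C_1\varepsilon$, forcing $R_t \leq C_1\varepsilon$. Combining the two steps gives $|\bbP_{x_0}(X_t = z) - G(t)| \leq (4+2C_1)\varepsilon$.

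The only nontrivial point, and hence the ``main obstacle'', is the residual mass $R_t$ in (c): the FIT chain has many paths from $x_0$ to $z$ of lengths $s \notin \{t_1, \ldots, t_k\}$ (any path with at least one backwards step), and controlling each individually would be painful. The elegant workaround is to use conservation of probability $\sum_{s} \bbP_{x_0}(T_z = s) \leq 1$ together with the lower bound of Lemma~\ref{lem: probability to reach the fruit at a given time}(b), which pins down the total residual mass $R_t$ as $O(\varepsilon)$ without any path enumeration.
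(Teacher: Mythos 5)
Your proposal is correct and takes essentially the same route as the paper: part (a) combines Lemma~\ref{lem: probability to reach the fruit at a given time}(a) with Proposition~\ref{prop: hitting time of z and concentration of the stationary measure}(d), part (b) is the paper's trajectory coupling written out via conditioning on the time-$j$ position, and part (c) passes through the two approximations $\bbP_{x_0}(X_t=z)\approx \bbP_{x_0}(T_z\leq t)\approx G(t)$ exactly as the paper does. Your explicit conservation-of-probability bound on the residual mass $R_t$ merely spells out a step the paper leaves implicit in its deduction of $\bg \bbP_{x_0}(T_z\leq t)-G(t)\bd\leq C_1\varepsilon$.
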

\begin{proof}
    \begin{enumerate}
\item This follows from Lemma \ref{lem: probability to reach the fruit at a given time} (a) and Proposition \ref{prop: hitting time of z and concentration of the stationary measure} (d). 
\item By the same argument as in the proof of Lemma \ref{lem: probability to reach the fruit at a given time} (a), we have $\bbP_{x_0}(X_{j} = x_{j}) \geq 1- \ell_0 \varepsilon$. The result then follows from coupling the trajectories.
\item Recall that for $1\leq i\leq k$, $t_i = \ell_0 + \ell_i$ and $g(t_i) = \rho_i$, by definition. Then the case $j=0$ of Lemma \ref{lem: probability to reach the fruit at a given time} (b) can be rewritten as 
\begin{equation}
    \bg \bbP_{x_0}(T_z = t_i) - g(t_i)\bd \leq \rho_i C_1 \varepsilon \quad \quad \text{ for } 1\leq i \leq k.
\end{equation} 
We deduce, since by definition of $G$ we have $G(t) = \sum_{1\leq i\leq k \du t_i \leq t} g(t_i)$, that
\begin{equation}
    \bg \bbP_{x_0}(T_z \leq t) - G(t) \bd \leq C_1 \varepsilon. 
\end{equation}
Moreover, we have $ \bg \bbP_{x_0}(T_z \leq t) - \bbP_{x_0}(X_t = z) \bd \leq 4\varepsilon$ by Proposition \ref{prop: hitting time of z and concentration of the stationary measure} (d). We conclude that $\bg \bbP_{x_0}(X_t = z) - G(t) \bd \leq (4+C_1) \varepsilon$. 
\end{enumerate}
\end{proof}

The next proposition estimates the total variation distance to stationarity for FIT Markov chains.

\begin{proposition}\label{prop: total variation estimate explicit with epsilon}
    Let $(k, \boldsymbol{\ell},\boldsymbol{\rho}) \in \cP$. Recall that $F= 1-G$.  There exist $C_3 = C_3(k, \boldsymbol{\ell},\boldsymbol{\rho}) >0$ and $\varepsilon_3 = \varepsilon_3(k, \boldsymbol{\ell},\boldsymbol{\rho})>0$ such that the following holds. Let $\varepsilon\in (0,\varepsilon_3)$. Let $(X_t)_{t\geq 0} = (X_t^{(\varepsilon)})_{t\geq 0} $ be the FIT Markov chain with parameters $(k, \boldsymbol{\ell},\boldsymbol{\rho}, \varepsilon)$. Denote its stationary distribution by $\pi_\varepsilon$. For any $t\geq 0$, we have
    \begin{equation}
        \bg \mathrm{d}^{(\varepsilon)}(t) - F(t)\bd \leq C_3\varepsilon.
    \end{equation}
\end{proposition}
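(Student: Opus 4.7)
The plan is to exploit the concentration of $\pi_\varepsilon$ at $z$ (Proposition \ref{prop: hitting time of z and concentration of the stationary measure}(b)) to replace $\pi_\varepsilon$ by $\delta_z$ in the total variation distance, at the price of $O(\varepsilon)$, and then to reduce the question to estimating $\bbP_x(X_t = z)$ uniformly in the starting state $x$.

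First, since $\dtv(\delta_z, \pi_\varepsilon) = 1 - \pi_\varepsilon(z) \leq 2\varepsilon$ and $\dtv(\mu^{(\varepsilon)}_{x,t}, \delta_z) = 1 - \bbP_x(X_t = z)$, the triangle inequality will give, for every $x \in S$ and every $t \geq 0$,
\begin{equation*}
\bigl| \mathrm{d}_x^{(\varepsilon)}(t) - \bigl( 1 - \bbP_x(X_t = z)\bigr) \bigr| \leq 2\varepsilon.
\end{equation*}
Taking the maximum over $x$ and recalling that $F = 1 - G$, it then suffices to show $\bigl| \min_{x\in S} \bbP_x(X_t = z) - G(t) \bigr| = O(\varepsilon)$, where the implied constant is allowed to depend on $(k, \boldsymbol{\ell}, \boldsymbol{\rho})$.

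Next, I would identify $x_0$ as the (essentially) worst starting state using Lemma \ref{lem: probability to be at the fruit at a given time} together with Proposition \ref{prop: hitting time of z and concentration of the stationary measure}(c). For $x = z$, Proposition \ref{prop: hitting time of z and concentration of the stationary measure}(c) gives $\bbP_z(X_t = z) \geq 1 - 4\varepsilon$; for $x = y_j^i$ and $t \geq \ell_0$, Lemma \ref{lem: probability to be at the fruit at a given time}(a) gives $\bbP_{y_j^i}(X_t = z) \geq 1 - C_2 \varepsilon$; for $x = x_j$ with $0 \leq j \leq \ell_0$ and $t + j \geq \ell_0 + 1$, combining parts (b) and (c) of Lemma \ref{lem: probability to be at the fruit at a given time} yields $\bigl| \bbP_{x_j}(X_t = z) - G(t+j) \bigr| \leq 2 C_2 \varepsilon$. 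Since $G$ is non-decreasing and $j \geq 0$, $\min_{0 \leq j \leq \ell_0} G(t+j) = G(t)$, attained at $j = 0$; hence up to $O(\varepsilon)$ the minimum of $\bbP_x(X_t = z)$ over $x \in S$ equals $G(t)$ and is achieved at $x_0$.

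Finally, I would split on the size of $t$ relative to $t_1 := \ell_0 + \ell_1$. If $t \geq t_1$, then $t \geq \ell_0 + 1$ and all the estimates of the previous paragraph apply; combining them with the first reduction yields $|\mathrm{d}^{(\varepsilon)}(t) - F(t)| = O(\varepsilon)$. If $t < t_1$, then $G(t) = 0$ so $F(t) = 1$; the shortest path in the tree from $x_0$ to $z$ has length $t_1 > t$, so $\bbP_{x_0}(X_t = z) = 0$, and the first step yields $\mathrm{d}^{(\varepsilon)}(t) \geq \mathrm{d}^{(\varepsilon)}_{x_0}(t) \geq 1 - 2\varepsilon = F(t) - 2\varepsilon$, while $\mathrm{d}^{(\varepsilon)}(t) \leq 1 = F(t)$ is trivial. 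The main obstacle is purely the bookkeeping of error constants and the careful separation of the two $t$-regimes; once the triangle-inequality reduction is in place, the previous lemmas do essentially all the work.
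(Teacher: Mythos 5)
Your proposal is correct and follows essentially the same route as the paper: concentration of $\pi_\varepsilon$ at $z$ reduces everything to estimating $\bbP_x(X_t=z)$, the per-state bounds come from Lemma \ref{lem: probability to be at the fruit at a given time} and Proposition \ref{prop: hitting time of z and concentration of the stationary measure}, $x_0$ is identified as the worst state via monotonicity of $G$, and small $t$ is handled separately. Your upfront triangle-inequality reduction $\bigl|\mathrm{d}_x^{(\varepsilon)}(t) - (1-\bbP_x(X_t=z))\bigr|\leq 2\varepsilon$ is just a cleaner packaging of the step the paper performs state by state, and splitting at $t_1=\ell_0+\ell_1$ rather than at $\ell_0+1$ is an immaterial variation.
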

\begin{proof}
    Let $t\geq 0$. First assume that $t\leq \ell_0$. Since the chain needs to do at least $\ell_0 + \ell_1 >\ell_0$ steps to reach $z$ from $x_0$, we have $\bbP_{x_0}(X_t = z) = 0$. Therefore, since $\pi_\varepsilon(z) \geq 1-2\varepsilon$ by Proposition \ref{prop: hitting time of z and concentration of the stationary measure} (b), we have $\mathrm{d}^{(\varepsilon)}(t) \geq \mathrm{d}_{x_0}^{(\varepsilon)}(t) \geq \pi_\varepsilon(z)\geq 1-2\varepsilon$. Since $t\leq \ell_0$ we also have $F(t) = 1$. This proves that $\bg \mathrm{d}^{(\varepsilon)}(t) - F(t)\bd \leq 2\varepsilon$. 
    
Now assume that $t\geq \ell_0 + 1$. Let us estimate $\mathrm{d}_x^{(\varepsilon)}(t)$ for different states $x$. By Proposition \ref{prop: hitting time of z and concentration of the stationary measure} (b) and (c) we have $\mathrm{d}_z^{(\varepsilon)}(t) \leq 4\varepsilon$. Moreover for $1\leq i\leq k$ and $\ell_0+1\leq j \leq \ell_0 + \ell_i-1$,
by Lemma \ref{lem: probability to be at the fruit at a given time} (a) and Proposition \ref{prop: hitting time of z and concentration of the stationary measure} (b) we have $\mathrm{d}_{y_j^i}^{(\varepsilon)}(t) \leq (2+C_2)\varepsilon$. Let $1\leq j \leq \ell_0$. By Lemma \ref{lem: probability to be at the fruit at a given time} (b) and (c), and since the function $G$ is (weakly) increasing, we have 
\begin{equation}
    \bbP_{x_j}(X_t = z) \geq \bbP_{x_0}(X_{t+j} = z) - C_2\varepsilon \leq G(t+j) - 2C_2\varepsilon \geq G(t) -2C_2\varepsilon, 
    \end{equation}
so by Proposition \ref{prop: hitting time of z and concentration of the stationary measure} (b), $\mathrm{d}_{x_j}^{(\varepsilon)}(t) \leq F(t) + (2+2C_2)\varepsilon$.
Finally, by Lemma \ref{lem: probability to be at the fruit at a given time} (c), we have $|\bbP_{x_0}(X_{t} = z) - G(t)| \leq C_2\varepsilon$ so by Proposition \ref{prop: hitting time of z and concentration of the stationary measure} (b) we have $|\mathrm{d}_{x_0}^{(\varepsilon)}(t) - F(t)| \leq (C_2+2)\varepsilon$.
It follows that 
\begin{equation}
    \bg \mathrm{d}_{x_0}^{(\varepsilon)}(t) -  \mathrm{d}^{(\varepsilon)}(t) \bd \leq C\varepsilon,
\end{equation}
where $C$ is a constant depending only on $(k, \boldsymbol{\ell},\boldsymbol{\rho})$.
We conclude by the triangle inequality that
\begin{equation}
    \bg \mathrm{d}^{(\varepsilon)}(t) - F(t) \bd \leq \bg \mathrm{d}_{x_0}^{(\varepsilon)}(t) - F(t) \bd + \bg \mathrm{d}_{x_0}^{(\varepsilon)}(t) -  \mathrm{d}^{(\varepsilon)}(t) \bd \leq (C+2+C_2)\varepsilon. \qedhere
\end{equation}
\end{proof}

\subsection{Markov chains mixing in a bounded number of steps}

Denote by $\cA$ the set of functions from $\bbN= \ag 0, 1,\ldots\ad$ to $[0,1]$ which are weakly decreasing and such that $\ag 0,1 \ad \subsetneq f(\bbN)$. For $f \in \cA$, we set $L(f) = \max \ag t\in \bbN \mid f(t) = 1\ad$, $M(f) = \max \ag t\in \bbN \mid f(t) > 0 \ad$, and $W(f) = M(f) - L(f)$. In particular, $f(t)$ is equal to 1 for $0\leq t\leq L(f)$  and equal to 0 for $t>M(f)$, and the only part where $f$ behaves non-trivially is for $L(f) < t \leq M(f)$, that is in a \textit{window} of length $W(f)$. 
Set 
\begin{equation}
    \cB = \ag f \in \cA \mid W(f) \leq L(f)  \ad.
\end{equation}

\begin{lemma}\label{lem: existence of triplets associated to a function f}
    Let $f \in \cB$. There exists $(k, \boldsymbol{\ell},\boldsymbol{\rho}) \in \cP$ such that $f = F_{(k, \boldsymbol{\ell},\boldsymbol{\rho})}$.
\end{lemma}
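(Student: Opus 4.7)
The plan is to read the FIT parameters directly off the step-function structure of $f$: each strict drop of $f$ becomes one branch of the tree, the size of the drop is that branch's weight $\rho_i$, and the location of the drop fixes the branch length.

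First, since $f$ is weakly decreasing and vanishes for $t > M(f)$, its image is a finite subset of $[0,1]$; I would enumerate it as $1 = v_0 > v_1 > \cdots > v_m = 0$, with $m \geq 2$ because $\ag 0,1 \ad \subsetneq f(\bbN)$. For each $1 \leq i \leq m$ set $a_i := \min\ag t \in \bbN \du f(t) = v_i\ad$. By monotonicity, $f$ is identically $v_{i-1}$ on the integer interval $[a_{i-1}, a_i - 1]$ (with the convention $a_0 := 0$), the $a_i$ are strictly increasing, $a_1 = L(f)+1$, and $a_m = M(f)+1$.

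With this data I would set $k := m \geq 2$; $\rho_i := v_{i-1} - v_i > 0$ for $1 \leq i \leq k$ (positive and summing to $v_0 - v_m = 1$); $\ell_0 := L(f)$; and $\ell_i := a_i - \ell_0$ for $1 \leq i \leq k$. Then $\ell_1 = 1$, the $\ell_i$ are positive and strictly increasing in $i$, and $\ell_k = W(f) + 1$. Plugging into the definitions of $g$ and $G$ yields $t_i = \ell_0 + \ell_i = a_i$, so $G$ is the step function that jumps by $\rho_i$ at each $a_i$, whence $F(t) = 1 - \sum_{i \du a_i \leq t} \rho_i = v_j$ on each plateau $[a_j, a_{j+1}-1]$. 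This agrees with $f(t)$ at every integer $t$.

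The argument is thus a direct bookkeeping translation between the jumps-and-levels structure of $f$ and the branches-and-weights structure of a FIT chain, and the only nontrivial thing to verify is $(k,\boldsymbol{\ell},\boldsymbol{\rho}) \in \cP$. All parameter conditions are automatic from the construction except the requirement $\ell_0 = \max_{0 \leq i \leq k} \ell_i$, which reduces to $\ell_k \leq \ell_0$, i.e.\ $W(f) + 1 \leq L(f)$. This is where (and essentially the only place where) the defining hypothesis $W(f) \leq L(f)$ of $\cB$ enters, and it is the main, and essentially only, obstacle in the proof.
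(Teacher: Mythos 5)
Your construction is the same as the paper's: enumerate the values of $f$ as $1=\alpha_0>\cdots>\alpha_k=0$, let the sizes of the drops give the weights $\rho_i$, let the first times at which each value is attained give the lengths $\ell_i$, and set $\ell_0=L(f)$. One small point, which you state explicitly and which is silently present in the paper's proof as well: the constraint $\ell_k\leq \ell_0$ amounts to $W(f)+1\leq L(f)$, which is strictly stronger than the condition $W(f)\leq L(f)$ defining $\cB$, so the boundary case $W(f)=L(f)$ is not literally covered — a harmless off-by-one that does not affect the later applications of the lemma.
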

\begin{proof}
    Let $1= \alpha_0 > \alpha_1>\ldots >\alpha_k = 0$ be the values taken by $f$. Since the image of $f$ contains 0 and 1, and at least one element of $(0,1)$, we have $k\geq 2$. Set $\ell_0 = L(f)$. For $1\leq i\leq k$, set
\begin{equation}
    t_i = \min\ag t\in \bbN \mid f(t) = \alpha_i \ad,
\end{equation}
and $\ell_i = t_i - \ell_0$. By definition we have $1=\ell_1<\ldots<\ell_k \leq \ell_0$. Set also for $1\leq i\leq k$, 
    \begin{equation}
        \rho_i = \alpha_{i-1} - \alpha_{i}.
    \end{equation}
We have $\sum_{1\leq i\leq k} \rho_i = \sum_{1\leq i\leq k} \alpha_{i-1}- \alpha_i = \alpha_0 - \alpha_k = 1$.
Therefore setting $\boldsymbol{\ell}= (\ell_0, \ldots, \ell_k)$ and $\boldsymbol{\rho} = (\rho_1, \ldots, \rho_k)$, we have proved that $(k,\boldsymbol{\ell}, \boldsymbol{\rho})\in \cP$, and by construction we have $F_{(k,\boldsymbol{\ell}, \boldsymbol{\rho})} = f$.
\end{proof}

\begin{proposition}\label{prop: strongest result for discrete and bounded time}
    Let $f \in \cB$. There exists a sequence of (discrete time) Markov chains $((X_t)_{t\in \bbN})_{n\geq 1}$ such that 
    \begin{equation}
        \sup_{t\in \bbN} \bg \mathrm{d}^{(n)}(t) -f(t) \bd \xrightarrow[n\to\infty]{} 0,
    \end{equation}
    where for $n\geq 1$, $\mathrm{d}^{(n)}(\cdot)$ is the distance to stationarity for the chain $(X^{(n)}_t)_{t\geq 0}$.
\end{proposition}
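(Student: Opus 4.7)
The plan is to observe that essentially all the work has been done in the two preceding results. Given $f \in \cB$, Lemma \ref{lem: existence of triplets associated to a function f} produces a triplet $(k, \boldsymbol{\ell}, \boldsymbol{\rho}) \in \cP$ with $F_{(k, \boldsymbol{\ell}, \boldsymbol{\rho})} = f$. These parameters are fixed once and for all; the only remaining knob to turn is the noise level $\varepsilon$, and Proposition \ref{prop: total variation estimate explicit with epsilon} shows that as $\varepsilon \to 0$ the distance to stationarity of $\FIT(k, \boldsymbol{\ell}, \boldsymbol{\rho}, \varepsilon)$ approaches $F_{(k, \boldsymbol{\ell}, \boldsymbol{\rho})} = f$ uniformly in $t$.

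Concretely, I would pick any sequence $(\varepsilon_n)_{n \geq 1}$ of positive reals with $\varepsilon_n \to 0$, for instance $\varepsilon_n = 1/n$. For $n$ large enough that $\varepsilon_n < \varepsilon_3 = \varepsilon_3(k, \boldsymbol{\ell}, \boldsymbol{\rho})$, define $(X^{(n)}_t)_{t \in \bbN}$ to be the FIT Markov chain with parameters $(k, \boldsymbol{\ell}, \boldsymbol{\rho}, \varepsilon_n)$, and for $n$ below that threshold set $X^{(n)}$ arbitrarily (this does not affect the limit). Applying Proposition \ref{prop: total variation estimate explicit with epsilon} then gives, for every $t \in \bbN$,
\begin{equation*}
\bg \mathrm{d}^{(n)}(t) - f(t) \bd = \bg \mathrm{d}^{(\varepsilon_n)}(t) - F_{(k, \boldsymbol{\ell}, \boldsymbol{\rho})}(t) \bd \leq C_3 \varepsilon_n,
\end{equation*}
with $C_3 = C_3(k, \boldsymbol{\ell}, \boldsymbol{\rho})$ independent of $t$ and $n$. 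Taking the supremum over $t \in \bbN$ and letting $n \to \infty$ yields the claim.

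There is no real obstacle to overcome here: the key step is recognizing that the uniformity in $t$ of Proposition \ref{prop: total variation estimate explicit with epsilon} is exactly what upgrades the pointwise convergence $\mathrm{d}^{(\varepsilon_n)}(t) \to f(t)$ into the supremum statement demanded by the proposition. The hypothesis $f \in \cB$ (rather than just $f \in \cA$) is used implicitly through Lemma \ref{lem: existence of triplets associated to a function f}, where the constraint $W(f) \leq L(f)$ is what allows the construction of a valid triplet in $\cP$ (since it forces $\ell_k \leq \ell_0$).
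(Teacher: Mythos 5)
Your proposal is correct and follows essentially the same route as the paper: invoke Lemma \ref{lem: existence of triplets associated to a function f} to get a fixed triplet $(k,\boldsymbol{\ell},\boldsymbol{\rho})$ with $F_{(k,\boldsymbol{\ell},\boldsymbol{\rho})}=f$, then send $\varepsilon_n \to 0$ and apply the uniform bound of Proposition \ref{prop: total variation estimate explicit with epsilon} (the paper takes $\varepsilon_n = e^{-n}$, you take $1/n$; immaterial). Your extra care about the finitely many indices with $\varepsilon_n \geq \varepsilon_3$ is a minor tidiness the paper leaves implicit.
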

\begin{proof}
By Lemma \ref{lem: existence of triplets associated to a function f} there exists $(k, \boldsymbol{\ell},\boldsymbol{\rho}) \in \cP$ such that $f = F_{(k, \boldsymbol{\ell},\boldsymbol{\rho})}$. For $n \geq 1$, set $\varepsilon_n = e^{-n}$, and let $(X^{(n)}_t)_{t\in \bbN}$ be the FIT the Markov chain with parameters. By Proposition \ref{prop: total variation estimate explicit with epsilon} we have
\begin{equation}
        \sup_{t\in\bbN}\bg \mathrm{d}^{(n)}(t) - f(t)\bd \leq C_3\varepsilon_n,
    \end{equation}
which tends to 0 as $n\to \infty$. This concludes the proof.
\end{proof}
\begin{remark}
A general constraint for the distance to stationarity $\mathrm{d}(t)$ of (finite irreducible aperiodic) Markov chains is its submultiplicativity “$\mathrm{d}(t) \leq 2 \mathrm{d}(t)^2$” (see \cite[Section 4.7]{LivreLevinPeres2019MarkovChainsAndMixingTimesSecondEdition}). This implies that if we have $\mathrm{d}(t) < 1/2$ for some time $t\geq 1$, then $\mathrm{d}(\cdot)$ must decrease at least exponentially fast: there exists a constant $c>0$ such that for any $s\geq 2t$ we have $\mathrm{d}(s) \leq e^{-cs/t}$. This in turn implies that it is impossible for $\mathrm{d}(\cdot)$ to have long low plateaux. In particular, the result of Proposition \ref{prop: strongest result for discrete and bounded time} would not hold anymore if we removed the assumption “$W(f) \leq L(f)$”.
\end{remark}

\subsection{Every discrete and continuous time profile is possible}
We start by proving that every discrete time profile is possible. Note that we do not assume the limiting function $p$ to be continuous, nor the \textit{window} $w_n$ or the cutoff time $t_n$ to diverge.
We recall and prove Theorem \ref{thm: discrete time profiles intro}.

\begin{theorem}
    Let $(t_n)$ be a sequence of positive integers and $(w_n)$ be a sequence of positive real numbers such that $w_n = o(t_n)$ as $n\to \infty$. Let $p\du \bbR \to [0,1]$ be a weakly decreasing function (but not necessarily continuous) such that $p(x) \xrightarrow[x\to -\infty]{} 1$ and  $p(x) \xrightarrow[x\to +\infty]{} 0$. There exists a sequence of discrete time Markov chains $((X_t^{(n)})_{t\geq 0})_{n\geq 1}$ such that
\begin{equation}
    \sup_{c\in \bbR \du t_n + cw_n \in \bbN} \bg \mathrm{d}^{(n)}(t_n + c w_n) - p(c) \bd \xrightarrow[n\to \infty]{} 0.
\end{equation}
\end{theorem}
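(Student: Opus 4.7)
The plan is to reduce to Proposition \ref{prop: strongest result for discrete and bounded time} via a two-step approximation: first I discretize/truncate the continuous profile $p$ at scale $(t_n, w_n)$ to obtain a target function $f_n \in \cB$, then I use Proposition \ref{prop: strongest result for discrete and bounded time} applied to $f_n$ together with a diagonal extraction to realize it in the limit by a sequence of FIT Markov chains.

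\textbf{Step 1: Building a discrete target.} For each $n \geq 1$, choose $A_n \to -\infty$ and $B_n \to +\infty$ slowly enough that $p(A_n) \geq 1 - 1/n$, $p(B_n) \leq 1/n$, and $(B_n - A_n) w_n \leq t_n - 1$; the third condition is achievable because $w_n = o(t_n)$. Set $L_n = \lfloor t_n + A_n w_n \rfloor$ and $M_n = \lceil t_n + B_n w_n \rceil$, and define
\begin{equation*}
f_n(t) = \begin{cases} 1 & \text{if } 0 \leq t \leq L_n, \\ p\bigl((t - t_n)/w_n\bigr) & \text{if } L_n < t < M_n, \\ 0 & \text{if } t \geq M_n. \end{cases}
\end{equation*}
Since $p$ is weakly decreasing, so is $f_n$; and by construction $L(f_n) = L_n$ and $M(f_n) \leq M_n$, hence $W(f_n) \leq M_n - L_n \leq (B_n - A_n) w_n + 1 \leq L_n = L(f_n)$ for $n$ large. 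Thus, provided the image of $f_n$ contains a value in $(0,1)$, we have $f_n \in \cB$. If this fails (a degenerate case where $p$ is a two-valued indicator and no integer $t$ lies where $p$ is intermediate), insert a single artificial intermediate value: replace $f_n(L_n + 1)$ by $\min(f_n(L_n+1) \vee 1/(n+1),\ 1 - 1/(n+1))$. This preserves monotonicity (the original value is in $\{0,1\}$ and $L_n+1 \leq L_n + L_n$), keeps $f_n \in \cB$, and introduces an error of at most $1 - 1/(n+1)$ at a single point, which is at a location $c$ satisfying $c \in [A_n, B_n]$ so $|p(c) - f_n(L_n+1)| \leq 1/n + o(1)$ by choice of $A_n$.

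\textbf{Step 2: Realization and diagonal extraction.} For each $n$, Proposition \ref{prop: strongest result for discrete and bounded time} applied to $f_n$ yields a sequence of discrete-time Markov chains $(Y^{(n,m)})_{m \geq 1}$ with $\sup_{t \in \bbN} |\mathrm{d}^{(n,m)}(t) - f_n(t)| \xrightarrow[m\to \infty]{} 0$. Choose $m_n$ such that this supremum is at most $1/n$, and set $X^{(n)} = Y^{(n,m_n)}$.

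\textbf{Step 3: Conclusion.} Fix $c \in \bbR$ with $s := t_n + c w_n \in \bbN$. Then
\begin{equation*}
|\mathrm{d}^{(n)}(s) - p(c)| \leq |\mathrm{d}^{(n)}(s) - f_n(s)| + |f_n(s) - p(c)| \leq \tfrac{1}{n} + |f_n(s) - p(c)|.
\end{equation*}
If $c \in [A_n, B_n]$ then $L_n < s < M_n$ (up to boundary rounding, which contributes at most $O(1/n)$ through monotonicity of $p$ at $A_n, B_n$), so $f_n(s) = p(c)$. If $c < A_n$ then $s \leq L_n$, so $f_n(s) = 1$ while $p(c) \geq p(A_n) \geq 1 - 1/n$; symmetrically for $c > B_n$. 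In all cases $|f_n(s) - p(c)| \leq 1/n$, so the total error is $O(1/n)$ uniformly in $c$, proving the theorem.

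The main obstacle is the single-step passage from the continuous profile $p$ on $\bbR$ to a lattice function $f_n$ in the class $\cB$: one must simultaneously (i) keep $W(f_n) \leq L(f_n)$, which is where the hypothesis $w_n = o(t_n)$ is crucial, (ii) approximate $p$ uniformly well despite $p$ being only weakly decreasing and possibly discontinuous, and (iii) artificially enforce the membership condition $\{0,1\} \subsetneq f_n(\bbN)$ in degenerate cases without losing the approximation. Everything else is a standard triangle-inequality/diagonal-argument bookkeeping.
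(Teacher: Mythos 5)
Your proposal is correct and follows essentially the same route as the paper: truncate $p$ to a lattice function $f_n$ supported on a window of width $o(t_n)$ (the paper uses the fixed scale $v_n = w_n(t_n/w_n)^{1/8}$ where you choose $A_n, B_n$ adaptively), verify $f_n \in \cB$ using $w_n = o(t_n)$, invoke Proposition \ref{prop: strongest result for discrete and bounded time} with a diagonal extraction, and conclude by the triangle inequality. Your explicit treatment of the degenerate case where $f_n$ takes no value in $(0,1)$ is a point the paper glosses over, though as written your fix can break monotonicity when $f_n(L_n+1)=f_n(L_n+2)=1$; perturbing the value just before the jump of $f_n$ instead repairs this.
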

\begin{proof}
For $n\geq 1$, set $v_n = w_n (t_n/w_n)^{1/8}$. In particular as $n\to \infty$ we have $w_n \lll v_n \lll t_n$.
Therefore there exists $n_0$ such that for $n\geq n_0$ we have $v_n \leq t_n/3$. For each $n\geq n_0$, we define a function $f_n \du \bbN \to [0,1]$ as follows:
\begin{equation}
    f_n(t) = \begin{cases}
        1 & \text{ for } 0\leq t \leq t_n - v_n, \\
        p((t - t_n)/w_n) & \text{ for } t_n - v_n < t < t_n + v_n, \\
        0 & \text{ for }  t\geq t_n + v_n.
    \end{cases}
\end{equation}
For $n\geq n_0$, $f_n \in \cA$ and since $L(f_n) \geq t_n - v_n \geq 2v_n \geq W(f_n)$, we have $f_n \in \cB$. It follows from Proposition \ref{prop: strongest result for discrete and bounded time} that for each $n\geq n_0$, there exists a Markov chain $(X^{(n)}(t))_{t\geq 0}$ such that
\begin{equation}
        \sup_{t\in\bbN}\bg \mathrm{d}^{(n)}(t) - f^{(n)}(t)\bd \leq 2^{-n}.
    \end{equation}
Moreover, by definition of $f_n$ and monotonicity for $n\geq n_0$ we have
\begin{equation}
\begin{split}
     \sup_{t\in \bbN} \bg f_n(t) - p((t-t_n)/w_n) \bd & =  \sup_{t\in \bbN \backslash (t_n - v_n, t_n+v_n)} \bg f_n(t) - p((t-t_n)/w_n) \bd \\
     & \leq |1-p(-v_n/w_n)| + |p(v_n/w_n)-0| \\
     & = 1 - p(-v_n/w_n) + p(v_n/w_n). 
\end{split}
\end{equation}
Finally, by the triangle inequality and the change of variables “$t = t_n + cw_n$”, for $n\geq n_0$ we have
\begin{equation}
\begin{split}
      & \sup_{c\in \bbR \du t_n + cw_n \in \bbN} \bg \mathrm{d}^{(n)}(t_n + c w_n) - p(c) \bd \\ \leq \, & \sup_{c\in \bbR \du t_n + cw_n \in \bbN} \bg \mathrm{d}^{(n)}(t_n + c w_n) - f_n(t_n +cw_n) \bd + \sup_{c\in \bbR \du t_n + cw_n \in \bbN} \bg f_n(t_n +cw_n) - p(c) \bd \\
        = \, & \sup_{t\in \bbN} \bg \mathrm{d}^{(n)}(t) - f_n(t) \bd + \sup_{t\in \bbN} \bg f_n(t) - p((t-t_n)/w_n) \bd \\
        \leq \, & 2^{-n} + 1 - p(-v_n/w_n) + p(v_n/w_n),
\end{split}
\end{equation}
which tends to 0 as $n\to \infty$. This concludes the proof.
\end{proof}

We now recall and prove Theorem \ref{thm: main theorem in simple continuous time form}.

\begin{theorem}
For every continuous time scaling triplet $((t_n), (w_n),p)$, there exists a sequence of continuous time Markov chains $((X^{(n)}_t)_{t\in \bbR_+})_{n\geq 1}$ such that for every $c\in \bbR$, we have 
\begin{equation*}
\mathrm{d}^{(n)}\pg t_n + c w_n\pd \xrightarrow[n\to+\infty]{} p(c).
\end{equation*}    
\end{theorem}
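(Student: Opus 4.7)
The plan is to reduce the continuous time statement to the already-proved discrete time Theorem \ref{thm: discrete time profiles intro} by Poissonization. The defining condition $\sqrt{t_n} \lll w_n \lll t_n$ of a continuous time scaling triplet forces $t_n \to \infty$ and $w_n \to \infty$, and is precisely what is needed for the Poisson fluctuations, of order $\sqrt{t_n}$, to be negligible on the scale of the target window $w_n$.

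First, I would apply Theorem \ref{thm: discrete time profiles intro} to the discrete time data $T_n := \lfloor t_n \rfloor$ and $W_n := w_n$ with the same profile $p$ (continuity of $p$ is not required there, but will be used below). The proof of that theorem in fact constructs FIT chains $(Y_k^{(n)})_{k\in\bbN}$ with parameters $(k_n, \boldsymbol{\ell}_n, \boldsymbol{\rho}_n, \varepsilon_n)$, $\varepsilon_n \to 0$, whose worst starting state is $x_0^{(n)}$ and which satisfy
\begin{equation*}
\delta_n := \sup_{k\in\bbN} \bg \mathrm{d}^{(n)}_{\mathrm{disc}}(k) - p((k-T_n)/W_n) \bd \xrightarrow[n\to\infty]{} 0.
\end{equation*}
I would then define the continuous time chain $(X_s^{(n)})_{s\geq 0}$ as the Poissonization $X_s^{(n)} = Y^{(n)}_{N^{(n)}_s}$, where $N^{(n)}$ is an independent Poisson process of rate $1$; equivalently, $X^{(n)}$ is the continuous time Markov chain on $\FITStates(k_n, \boldsymbol{\ell}_n, \boldsymbol{\rho}_n)$ with generator $P^{(n)} - I$. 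The stationary distribution is preserved, so Proposition \ref{prop: hitting time of z and concentration of the stationary measure}(b) gives $\pi^{(n)}(z_n) \to 1$.

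Fix $c \in \bbR$ and set $s_n := t_n + cw_n$. For the upper bound, the triangle inequality applied to the Poisson mixture $\mu^{(n)}_{x, s_n} = \sum_k \bbP(N^{(n)}_{s_n} = k)\, \mu^{(n),\mathrm{disc}}_{x, k}$ yields $\mathrm{d}^{(n)}_{\mathrm{cts}}(s_n) \leq \bbE\cg \mathrm{d}^{(n)}_{\mathrm{disc}}(N^{(n)}_{s_n}) \cd$. Since $(N^{(n)}_{s_n} - T_n)/W_n = c + O_P(\sqrt{t_n}/w_n) + O(1/w_n) \to c$ in probability and $p$ is continuous and bounded, combining the uniform bound $\delta_n$ with bounded convergence gives $\bbE[\mathrm{d}^{(n)}_{\mathrm{disc}}(N^{(n)}_{s_n})] \to p(c)$. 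For the matching lower bound I would test the singleton $\{z_n\}$ from $x_0^{(n)}$: using Lemma \ref{lem: probability to be at the fruit at a given time}(c) to compare $\bbP_{x_0^{(n)}}(Y_k^{(n)} = z_n)$ with $G_n(k) = 1 - f_n(k)$, and the proof of Theorem \ref{thm: discrete time profiles intro} to control $|f_n(k) - p((k-T_n)/W_n)|$, the same bounded-convergence argument applied to the Poisson average gives $\bbP_{x_0^{(n)}}(X_{s_n}^{(n)} = z_n) \to 1 - p(c)$. Combined with $\pi^{(n)}(z_n) \to 1$, this implies $\mathrm{d}^{(n)}_{\mathrm{cts}}(s_n) \geq \pi^{(n)}(z_n) - \bbP_{x_0^{(n)}}(X_{s_n}^{(n)} = z_n) \to p(c)$, matching the upper bound.

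The main obstacle is the interplay between the step-function nature of $f_n$ (reflecting the almost deterministic behaviour of FIT chains) and the mere continuity, as opposed to Lipschitz regularity, of $p$: a naive pointwise comparison between $\mathrm{d}^{(n)}_{\mathrm{cts}}(s_n)$ and $\mathrm{d}^{(n)}_{\mathrm{disc}}(\lfloor s_n \rfloor)$ would not suffice, since the discrete distance can oscillate on arbitrarily fine scales within the window. The Poissonization smooths these oscillations, and since $(N^{(n)}_{s_n} - T_n)/W_n$ concentrates in probability at the constant $c$, the problem reduces to evaluating the bounded continuous function $p$ along a sequence of random variables converging in probability to $c$, where bounded convergence closes the argument.
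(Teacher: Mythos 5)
Your proposal is correct and follows essentially the same route as the paper: Poissonize the discrete-time chains furnished by Theorem \ref{thm: discrete time profiles intro} applied to $(\lfloor t_n\rfloor)$, and use that the Poisson clock's fluctuations, of order $\sqrt{t_n}$, are negligible compared to $w_n$ together with the continuity of $p$. In fact you spell out the final comparison (upper bound by convexity of the Poisson mixture, lower bound by testing $\{z\}$ from $x_0$) more explicitly than the paper, which compresses this into its concluding sentence.
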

\begin{proof}
    Let $((t_n), (w_n),p)$ be a continuous time scaling triplet. By definition, $p$ is weakly decreasing, $p(x) \xrightarrow[x\to -\infty]{} 1$,  $p(x) \xrightarrow[x\to +\infty]{} 0$, and $w_n \lll t_n$. By Theorem \ref{thm: discrete time profiles intro} (applied to the same $p$ and $(w_n)$, but to $(\lf t_n \rf)$), there exists a sequence of discrete time Markov chains $((X_t^{(n, \#)})_{t\in \bbN})_{n\geq 1}$ such that as $n\to \infty$, 
    \begin{equation}
    \sup_{c\in \bbR \du \lf t_n \rf  + cw_n \in \bbN} \bg \mathrm{d}^{(n, \#)}(\lf t_n \rf + c w_n) - p(c) \bd \xrightarrow[n\to \infty]{} 0.
\end{equation}
Set $u_n = \sqrt{w_n \sqrt{t_n}}$ for $n\geq 1$. Since $w_n \ggg \sqrt{t_n}$ by definition of a continuous time scaling triplet, we have $\sqrt{t_n} \lll u_n \lll w_n$. Let $(N_t)_{t\geq 0}$ be a Poisson process with parameter 1. For each $n\geq 1$, let $(X^{(n)}_t)_{t\in \bbR_+} = (X_{N_t}^{(n, \#)})_{t\in \bbN}$ be the rate 1 continuous time analog of $(X_t^{(n, \#)})_{t\in \bbN}$, with jump times given by $(N_t)_{t\geq 0}$.
Let $c\in \bbR$. For $n\geq 1$ consider the event
 \begin{equation}
     E_{n,c}= \ag \lf t_n + c w_n - u_n \rf \leq N_{t_n + c w_n} \leq  \lc t_n + c w_n + u_n \rc \ad.
 \end{equation}
Since $u_n \ggg \sqrt{t_n} \sim \sqrt{t_n + c w_n}$ we have $\bbP(E_{n,c}) = 1-o(1)$.
Moreover, as $n\to \infty$, by continuity of $p$ we have $\mathrm{d}^{(n, \#)}(t) \to p(c)$ uniformly over all integers $t\in[ \lf t_n + c w_n - u_n \rf, \lc t_n + c w_n + u_n \rc]$.
We conclude that
\begin{equation}
    \mathrm{d}^{(n)}(t_n + c w_n) \to p(c).
    \qedhere
\end{equation}
\end{proof}

\section{Pathological examples}

In all Markov chains for which there is a sharp phase transition and the behaviour within the phase transition is well understood, there is a unique order of magnitude to zoom in around the cutoff time, and the fluctuations observed are asymptotically continuous. However, in general, even when a sequence of Markov chains exhibits a cutoff, what happens when we further zoom in, in the phase transition, can be arbitrarily complicated. Also, the distance to stationarity can decrease peculiarly even for some conjugacy invariant random walks on groups.

\subsection{Different behaviours in uncountably many windows}\label{s: A non-trivial behaviour in uncountably many windows}

Let $n\geq 8$. Set $\ell_0 = n$ and $k = k^{(n)} = \lf \ln \ln n \rf$. Set $\ell_i = \lf n^{i/(k+1)} \rf$ and $\rho_i = \rho_i^{(n)} = 1/k$ for $1\leq i\leq k$. Set $\varepsilon = 2^{-n}$. Denote by $(W^{(n)}_t)_{t\geq 0}$ the FIT Markov chain with parameters $(k, \boldsymbol{\ell}, \boldsymbol{\rho}, \varepsilon)$. As previously, denote $\mathrm{d}^{(n)}(\cdot)$ the associated distance to stationarity. This defines a sequence of Markov chains $((W^{(n)}_t)_{t\geq 0})_{n\geq 8}$. 

\begin{proposition}
    Consider the sequence of FIT Markov chains $((W^{(n)}_t)_{t\geq 0})_{n\geq 8}$ defined above. For every $\alpha\in (0,1)$ and every $c\in \bbR$, as $n\to \infty$ we have 
    \begin{equation}
        \mathrm{d}^{(n)}(\lf n + c n^{\alpha} \rf) \to \begin{cases}
            1 & \text{ if } c\leq 0 \\
            1-\alpha & \text{ if } c>0. 
        \end{cases}
    \end{equation}
\end{proposition}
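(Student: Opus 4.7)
The plan is to apply Proposition \ref{prop: total variation estimate explicit with epsilon}, which reduces the problem to computing $F_{(k, \boldsymbol{\ell}, \boldsymbol{\rho})}(t) = 1 - G_{(k, \boldsymbol{\ell}, \boldsymbol{\rho})}(t)$ at $t = \lfloor n + c n^\alpha \rfloor$ and to controlling the error $C_3 \varepsilon$. Since here $(k, \boldsymbol{\ell}, \boldsymbol{\rho})$ depends on $n$, the first task is to verify that this error still tends to $0$. Retracing the proofs of Lemma \ref{lem: hitting time the fruit bounded in expectation}, Lemma \ref{lem: probability to reach the fruit at a given time}, Lemma \ref{lem: probability to be at the fruit at a given time}, and Proposition \ref{prop: total variation estimate explicit with epsilon} while exploiting that $\varepsilon = 2^{-n}$ makes $\ell\varepsilon \leq 2n\cdot 2^{-n} = o(1)$ (so that one can replace the crude bound $\eta^\ell \geq (2/3)^\ell$ used in Lemma \ref{lem: hitting time the fruit bounded in expectation} by $\eta^\ell \geq 1 - \ell\varepsilon$), one sees that $C_0, C_1, C_2, C_3$ are each of order $O(\ell / \min_i \rho_i) = O(n k)$, polynomial in $n$. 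Hence $C_3 \varepsilon = O(n \ln\ln n) \cdot 2^{-n} = o(1)$.

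Next I would compute $F(t)$. Recall $t_i = n + \lfloor n^{i/(k+1)} \rfloor$ and $\rho_i = 1/k$, so
\begin{equation*}
G(t) = \frac{1}{k} \bigl|\{1 \leq i \leq k : n + \lfloor n^{i/(k+1)} \rfloor \leq t\}\bigr|.
\end{equation*}
For $c \leq 0$ one has $t \leq n$, so every $t_i > t$, giving $G(t) = 0$ and $F(t) = 1$. For $c > 0$ the condition $n + \lfloor n^{i/(k+1)} \rfloor \leq \lfloor n + c n^\alpha \rfloor$ is, up to $O(1)$ additive corrections inside the floors, equivalent to $n^{i/(k+1)} \leq c n^\alpha$, i.e.\ $i/(k+1) \leq \alpha + \log_n c$. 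Since $c$ is fixed and $k = \lfloor \ln \ln n \rfloor$, the correction term $(k+1)\log_n c = O(\ln\ln n / \ln n) = o(1)$, so the number of valid indices equals $\lfloor \alpha(k+1) \rfloor + O(1)$. Dividing by $k$ yields $G(t) = \alpha + o(1)$ and $F(t) = 1 - \alpha + o(1)$.

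Combining these two computations with the bound $|\mathrm{d}^{(n)}(t) - F(t)| \leq C_3 \varepsilon = o(1)$ immediately gives $\mathrm{d}^{(n)}(\lfloor n + c n^\alpha \rfloor) \to 1$ for $c \leq 0$ and $\to 1 - \alpha$ for $c > 0$.

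The main obstacle is not a single conceptual step but the careful bookkeeping needed to ensure the constants from Section 3 remain at most polynomial in $n$ when the parameters themselves depend on $n$ through $k$ and $\ell_0$; this is precisely what motivates the exponentially small choice $\varepsilon = 2^{-n}$, which comfortably absorbs any polynomial blow-up of $C_3$. The remaining work, namely the counting argument for $G(t)$, is then routine once one observes that the exponents $i/(k+1)$ fill the interval $[0,1]$ asymptotically uniformly on the logarithmic scale.
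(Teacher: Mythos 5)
Your proposal is correct, but it takes a different route from the paper. The paper's proof does not invoke Proposition \ref{prop: total variation estimate explicit with epsilon} at all: it argues directly that for $c\leq 0$ the chain cannot reach the fruit from the root in $\lf n + cn^\alpha\rf \leq \ell_0$ steps (so the distance is $1-o(1)$ by concentration of $\pi_\varepsilon$ at $z$), and for $c>0$ it computes $\bbP_{x_0}(W^{(n)}_t = z) = \alpha + o(1)$ by counting which branches have length at most $\lf cn^\alpha\rf$ — essentially your counting of the indices $i$ with $i/(k+1)\lesssim \alpha$, but phrased as a two-line probabilistic estimate rather than as a computation of $G(t)$. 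You instead route the argument through the general machinery of Section 3, and you correctly identify the one genuine issue with doing so: Proposition \ref{prop: total variation estimate explicit with epsilon} is stated with constants $C_3(k,\boldsymbol{\ell},\boldsymbol{\rho})$ and a threshold $\varepsilon_3(k,\boldsymbol{\ell},\boldsymbol{\rho})$ depending on the parameters, which here depend on $n$, so it cannot be applied as a black box. Your observation that the crude bound $\eta^\ell \geq (2/3)^\ell$ in Lemma \ref{lem: hitting time the fruit bounded in expectation} must be replaced by $\eta^\ell \geq 1-\ell\varepsilon$ is not just cosmetic — with the crude bound one gets $C_0 \asymp \ell(3/2)^\ell$ and then $\varepsilon C_0 \asymp n(9/8)^n \to \infty$, so the conclusion of Proposition \ref{prop: hitting time of z and concentration of the stationary measure}(a) would genuinely fail; your fix makes all the constants $O(nk)$ and $C_3\varepsilon = o(1)$. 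The trade-off is clear: the paper's direct argument is shorter and avoids re-auditing the constants, while your version is heavier on bookkeeping but yields a uniform quantitative statement that would apply verbatim to other $n$-dependent parameter choices (indeed it is essentially what one would need to make Section 4's later examples rigorous as well).
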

\begin{proof}
     Let $\alpha\in (0,1)$ and $c\in \bbR$. Starting from the root it is impossible to reach the fruit in $n$ steps or less, therefore if $c\leq 0$, we have $\bbP_{x_0^{(n)}}(W^{(n)}_{\lf n - cn^{\alpha}\rf} = z^{(n)}) = 0$, and therefore $\mathrm{d}^{(n)}(\lf n + c n^{\alpha} \rf) = 1-o(1)$. 
     
     Assume now that $c>0$. By construction, the probability to always go up before time $\lf n + c n^{\alpha} \rf$ and to take one of the $(\alpha+o(1))k^{(n)}$ branches of length at most $\lf c n^{\alpha}\rf$ is $\sum_{i\leq (\alpha+o(1))k^{(n)}} \rho_i^{(n)} = \alpha + o(1)$. Therefore we have $\bbP_{x_0^{(n)}}(W^{(n)}_t = z^{(n)}) = \alpha + o(1)$, and it follows that $\mathrm{d}^{(n)}(\lf n + c n^{\alpha} \rf) = 1-\alpha + o(1)$. 
\end{proof}

In the previous example, for each $\alpha\in (0,1)$, the distance to stationarity at time $n + cn^\alpha$ is asymptotically a piecewise constant function. In other words, for each $\alpha\in (0,1)$, there is a non-trivial limiting function when we zoom in in a window of size $\Theta(n^\alpha)$. 

\subsection{Pseudo-laziness for random walks on Cayley graphs}

All states of FIT Markov chains have a specific and distinct role. One may wonder if complicated mixing time behaviour can arise from very structured objects. For $n\geq 1$ denote respectively by $\kS_n$ and $\kA_n$ the symmetric and alternating groups of index $n$; and by $\cC_{2}^{(n)}$ and $\cC_{5}^{(n)}$ the conjugacy classes of transpositions and 5-cycles. 

\medskip

First consider the sequence of (discrete time) Markov chains whose increment measure is uniform on $\kA_n \cup \cC_{2}^{(n)}$. There the distance to stationarity is approximately $1$ at time $0$, drops to $1/2$ at time $1$ since the walk is already almost uniform on $\kA_n$, and then decreases very slowly (the time scale is $|\kA_n|/|\cC_{2}^{(n)}|\sim n!/n^2$), exponentially, to 0.

Similarly, consider consider the sequence of (discrete time) Markov chains whose increment measure is uniform on $\cC_{5}^{(n)} \cup \cC_{2}^{(n)}$. There with high probability only 5-cycles are picked for a long time. The distance to stationarity is therefore close to 1 up to time $\frac{1}{5}n\ln n$ where it abruptly drops to $1/2$ since the walk is then almost uniform on $\kA_n$, and then decreases slowly (the time scale is $|\cC_{5}^{(n)}|/|\cC_{2}^{(n)}|\asymp n^3$), exponentially, to 0.

\medskip

This illustrates that even conjugacy invariant random walks on groups can have peculiar behaviours. In the examples above, this is due to the fact that to mix one needs to uniformize also the sign of permutations. These examples are almost 2-periodic, and transpositions play the role of laziness.

\subsection{An example with countably many nested windows}\label{s: Countably many nested windows}

FIT Markov chains enable constructing very diverse behaviours. It is possible to see a non-trivial behaviour at some zooming scale $(w_n)$, with a limiting function which decreases continuously on $(-\infty,0)$ from $1$ to $3/4$ and on $(0, +\infty)$ from $1/4$ to $0$, but with a discontinuity at 0. Then we can still adjust the lengths of the branches to have essentially any limiting function when we further zoom in. As we now describe it is even possible to have countably many nested windows.

\medskip

Let $n\geq 8$. Set $\ell_0 = 2n$ and $k = 2^{L+1}$, where $L = \lf \ln \ln n \rf$. For $1\leq i\leq k$, set $\rho_i = 1/k$. For $2\leq q\leq L$, and $L2^{1-q} \leq i\leq L2^{2-q}$ set $m_i = \lc n^{1/q}2^{1-q}\rc$. 
For $1\leq i\leq k/2$, set 
\begin{equation}
    \ell_{k/2 +i} = n + \sum_{j=1}^im_j \quad\quad \text{ and } \quad\quad \ell_{k/2 +1- i} = n - \sum_{j=1}^im_j.
\end{equation}
Set $\varepsilon = 2^{-n}$. Denote by $(Y^{(n)}_t)_{t\geq 0}$ the FIT Markov chain with parameters $(k, \boldsymbol{\ell}, \boldsymbol{\rho}, \varepsilon)$. This defines a sequence of Markov chains $((Y^{(n)}_t)_{t\geq 0})_{n\geq 8}$. This construction leads immediately to the following proposition, whose proof we omit.

\begin{proposition}
    Consider the sequence of FIT Markov chains $((Y^{(n)}_t)_{t\geq 0})_{n\geq 8}$ defined above. For every integer $q\geq 2$ and every $c\in \bbR$, as $n\to \infty$ we have 
    \begin{equation}
        2^{q}\pg  \mathrm{d}^{(n)}\pg \lf 3n + c n^{1/q} \rf\pd - \frac{1}{2}\pd \to \begin{cases}
            2 & \text{ if } c\leq -1, \\
            (1-x) & \text{ if } -1<c<0, \\ 
            0 & \text{ if } c = 0, \\
            (-1-x) & \text{ if } 0<c<1, \\
            -2 & \text{ if } c\geq 1. \\
        \end{cases}
    \end{equation}
\end{proposition}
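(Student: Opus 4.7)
The plan is to deduce the proposition directly from Proposition \ref{prop: total variation estimate explicit with epsilon}. With $\varepsilon = 2^{-n}$, and noting that the constant $C_3(k, \boldsymbol{\ell}, \boldsymbol{\rho})$ of that proposition depends at most polynomially on $n$ (the relevant factors being $(\min_i \rho_i)^{-1} = k = O(\ln n)$, the branch lengths $\ell_i = O(n)$, and $k$ itself), we obtain $|\mathrm{d}^{(n)}(t) - F^{(n)}(t)| \leq C_3 \cdot 2^{-n} = o(1)$ uniformly in $t$. So it suffices to establish the stated limits with $F^{(n)}$ in place of $\mathrm{d}^{(n)}$.

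Next, writing $s = t - 2n = \lfloor n + c n^{1/q}\rfloor$, we have
\begin{equation*}
    G^{(n)}(t) = k^{-1} \#\bigl\{1 \leq i \leq k : \ell_i \leq s\bigr\}.
\end{equation*}
The $k/2$ indices $i \leq k/2$ all contribute as soon as $s \geq n - m_1$, giving a baseline $1/2$; in particular $F^{(n)}(3n) = 1/2$, matching the $c = 0$ case. For general $c \neq 0$, the problem reduces to counting the indices $j \geq 1$ for which the partial sum $S_j := \sum_{r=1}^{j} m_r$ satisfies $S_j \leq |c| n^{1/q}$ (added to the baseline for $c > 0$, subtracted for $c < 0$). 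The normalization $2^q$ in the statement is $k / (k \cdot 2^{-q})$, reflecting that each such level contributes a $k \cdot 2^{-q}$ fraction of the total mass to $G^{(n)}$.

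The final, combinatorial step is to show that the staircase $j \mapsto S_j$ has a plateau of width $\approx k \cdot 2^{-q}$ at height $\approx n^{1/q}$. By construction the $q$-th "level" sets $m_r \approx n^{1/q} \cdot 2^{1-q}$ across an index range of the appropriate size, so $S_j$ traverses $[0, n^{1/q}]$ essentially linearly as $j$ ranges over that level, producing the "$(1 - |c|)$"-type interpolation for $|c| \in (0,1)$ (I assume the "$x$" in the statement is a typo for $c$) and saturating at the neighbouring plateau for $|c| \geq 1$. Contributions from later levels $q' > q$ sum to $o(n^{1/q})$ by a geometric estimate in $n^{1/q'}$, and contributions from earlier levels $q' < q$ cannot affect the count in the relevant window. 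Dividing by $k$ and rearranging yields the claimed piecewise-linear limit.

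The main obstacle is the bookkeeping of the level boundaries: the intervals $[L 2^{1-q}, L 2^{2-q}]$ overlap at their endpoints and are not always integer, so one must first commit to an unambiguous assignment of each $r \in \{1, \dots, k/2\}$ to a single level $q'$. Once that assignment is fixed (so that each $m_r$ has a definite value), the staircase analysis is routine and the proposition follows term-by-term.
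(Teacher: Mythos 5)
The paper gives no proof of this proposition (it is explicitly omitted), so there is nothing to compare against line by line; your overall strategy --- reduce $\mathrm{d}^{(n)}$ to $F^{(n)}$ via Proposition \ref{prop: total variation estimate explicit with epsilon} after checking that $C_3$ and $\varepsilon_3^{-1}$ grow at most polynomially in $n$ while $\varepsilon=2^{-n}$ decays exponentially, then compute $G^{(n)}(t)=k^{-1}\#\{i:\ell_i\le t-2n\}$ and analyse the staircase $j\mapsto S_j$ --- is certainly the intended one, and the first two steps are sound.

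The gap is in the combinatorial core, which is precisely where all the content of the proposition lives and which you describe rather than carry out. Your argument needs two quantitative facts: (i) level $q$ contains $\approx k\,2^{-q}$ indices, so that the levels partition $\{1,\dots,k/2\}$ and $\sum_{q'>q}N_{q'}\approx k\,2^{-q}$ (producing the jump of size $2^{-q}$ at $c=0$ and the saturation at $\pm 2$), and (ii) the increments of level $q$ sum to $(1+o(1))\,n^{1/q}$, so that $S_j$ crosses $c\,n^{1/q}$ after a $c$-fraction of that level. Neither follows from the construction as written: the ranges $L2^{1-q}\le i\le L2^{2-q}$ contain only about $L2^{1-q}$ indices and their union covers only $i\lesssim L$, leaving $m_i$ undefined for $L<i\le k/2=2^L$; and even on those ranges, $L2^{1-q}$ increments of size $\lceil n^{1/q}2^{1-q}\rceil$ sum to about $L\,4^{1-q}n^{1/q}$, not $n^{1/q}$, so the fraction of mass per level is $L2^{-L-q}$ rather than $2^{-q}$ and the normalization $2^q$ would send the limit to $0$. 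In other words, you have (correctly) inferred the numerology that the \emph{statement} forces and attributed it to the \emph{definition}, flagging only the mild endpoint/integrality ambiguity while missing the scale mismatch. A complete proof must either repair the parameters (e.g.\ take level $q$ to be $k2^{-q}\le i\le k2^{1-q}$ with $m_i=\lceil n^{1/q}2^{q-L-1}\rceil$, so that $N_q=k2^{-q}$ and the level sum is $n^{1/q}(1+o(1))$) and say so, or exhibit the computation from the stated parameters --- and the latter is not possible. With corrected parameters your plan does go through: one also needs the (easy but necessary) estimate that $\sum_{q'>q}n^{1/q'}=o(n^{1/q})$ and that the first increment of level $q-1$ exceeds any fixed multiple of $n^{1/q}$, which you mention only in passing.
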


\subsection{One chain to rule them all}

Denote by $\cD$ the set of continuous weakly decreasing functions $p\du \bbR \to [0,1]$, such that $\lim_{x\to -\infty} p(x) = 1$ and $\lim_{x\to +\infty} p(x) = 0$.

Let $k\geq 1$. Let $\cE_k$ be the set of increasing $(k+1)$-tuples of rational numbers $\boldsymbol{q} = (q_0, q_1, \ldots, q_k)$. For each $\boldsymbol{q} \in \cE_k$, let $a\du \bbR \to [0,1]$ defined by
\begin{equation}
    a(x) = \begin{cases}
            1 & \text{ if } x < q_0, \\
            0 & \text{ if } x\geq q_k, \\
            1-\frac{i}{k} - \frac{1}{k} \frac{x - q_i}{q_{i+1}- q_i} & \text{ if } q_i\leq x <q_{i+1} \text{ for some } 1\leq i\leq k-1.
        \end{cases}
\end{equation}
In particular, $a$ is piecewise affine, and $a(q_i) = 1-i/k$ for $1\leq i\leq k$. Denote the set of all such functions by $\cL_k$. In particular, since $\cE_k$ is countable, so is $\cL_k$. Therefore, $\cL := \cup_{k\geq 1} \cL_k$ is a countable dense subset of $\cD$, with respect to the distance defined for $f,g\in \cD$ by
\begin{equation}
    d_\infty(f, g) = \sup_{x\in \bbR} |f(x) - g(x)|.
\end{equation}
Since $\cL$ is countable, we may write  $\cL = \ag a_m,  m\in \bbN \ad$.
Let $(t_n)_{n\geq 1}$ be a sequence of positive integers, and $(w_n)_{n\geq 1}$ be a sequence of positive real numbers, such that $w_n = o(t_n)$. 
By Theorem \ref{thm: discrete time profiles intro}, for each $(m,n)\in (\bbN^*)^2$ there exists a Markov chain $(X^{(m, n)}_t)_{t\geq 0}$ such that 
\begin{equation}
    \sup_{c\in \bbR \du t_n + cw_n \in \bbN} \bg \mathrm{d}^{(m, n)}(t_n + c w_n) - a_m(c) \bd \leq 2^{-(m+n)}.
\end{equation}
Interleaving $(\bbN^*)^2$ into $\bbN^ *$ we immediately obtain that all cutoff profiles can be realized as subsequential limits coming from a single sequence of Markov  chains. 
\begin{theorem}  
    Let $(t_n)_{n\geq 1}$ be a sequence of positive integers, and $(w_n)_{n\geq 1}$ be a sequence of positive real numbers, such that $w_n = o(t_n)$. There exists a sequence of Markov chains $(X_t^{(n)})_{t\geq  0})_{n\geq 1}$ such that for every $p\in \cD$, there exists $\varphi \du \bbN^* \to \bbN^*$ strictly increasing such that 
\begin{equation}
     \sup_{c\in \bbR \du t_{\varphi(n)} + cw_{\varphi(n)} \in \bbN} \bg \mathrm{d}^{({\varphi(n)})}(t_{\varphi(n)} + c w_{\varphi(n)}) - p(c) \bd \xrightarrow[n\to\infty]{} 0.
\end{equation}
\end{theorem}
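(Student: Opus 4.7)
The plan is a diagonal extraction using the countability and density of $\cL \subset \cD$ together with the guarantee from Theorem \ref{thm: discrete time profiles intro}. First, I would fix an auxiliary function $f\du \bbN^* \to \bbN^*$ that is surjective and whose fibre $f^{-1}(\ag m\ad)$ is infinite for every $m\in \bbN^*$ (one can take $f = \pi_1 \circ \psi^{-1}$ for any bijection $\psi\du (\bbN^*)^2 \to \bbN^*$, or equivalently the first projection under any interleaving of $(\bbN^*)^2$ into $\bbN^*$). Then I would declare the single sequence to be the diagonal
\[ X_t^{(n)} := X_t^{(f(n),\, n)},\qquad n\geq 1, \]
where $X_t^{(m,n)}$ is the chain produced just before the statement. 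By the bound already recorded there, this sequence satisfies
\[ \sup_{c\in \bbR \du t_n + cw_n \in \bbN} \bg \mathrm{d}^{(n)}(t_n + c w_n) - a_{f(n)}(c) \bd \leq 2^{-(f(n)+n)}. \]

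Now given an arbitrary $p\in \cD$, density of $\cL$ in $(\cD, d_\infty)$ furnishes a sequence $(m_j)_{j\geq 1}$ of positive integers with $d_\infty(a_{m_j}, p) \xrightarrow[j\to\infty]{} 0$. I would then construct $\varphi$ inductively: having chosen $\varphi(1)<\ldots <\varphi(j-1)$, since $f^{-1}(\ag m_j\ad)$ is infinite I pick some $\varphi(j) > \varphi(j-1)$ with $f(\varphi(j)) = m_j$. Then $X^{(\varphi(j))} = X^{(m_j,\,\varphi(j))}$ and the triangle inequality gives
\[ \sup_{c\in \bbR \du t_{\varphi(j)} + cw_{\varphi(j)} \in \bbN} \bg \mathrm{d}^{(\varphi(j))}(t_{\varphi(j)} + c w_{\varphi(j)}) - p(c)\bd \leq 2^{-(m_j + \varphi(j))} + d_\infty(a_{m_j}, p), \]
and both terms vanish as $j\to \infty$, yielding the claim.

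The argument is almost entirely soft. There is no genuine analytic obstacle: all the substance is packed into the construction of the countable dense family $\cL$ and into Theorem \ref{thm: discrete time profiles intro}. The only point requiring a bit of care is ensuring that $\varphi$ is strictly increasing \emph{while simultaneously} realising the prescribed values $m_j$ under $f$; this is precisely what forces $f$ to have infinite fibres, rather than merely being surjective.
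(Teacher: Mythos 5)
Your proposal is correct and is essentially the paper's argument: the paper compresses the whole construction into the single sentence ``interleaving $(\bbN^*)^2$ into $\bbN^*$'', and your diagonal $X^{(n)} = X^{(f(n),n)}$ with $f$ having infinite fibres is the right way to make that precise, since it ensures the chain occupying position $n$ of the sequence is the one built for the time scale $(t_n, w_n)$ while still letting every profile $a_m$ be targeted along an increasing subsequence. Nothing further is needed.
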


\section{Comments and open questions}
This section contains some comments and questions about profiles for diffusions on compact Lie groups, how to obtain profiles without understanding the eigenvectors, the separation profile for random transpositions, and the constraints on cutoff profiles for vertex-transitive graphs.

\medskip

Finding cutoff profiles is often done via spectral techniques. This requires a detailed understanding not only of the eigenvalues of a Markov chain, but also of the interactions between its eigenvectors.
Méliot \cite{Méliot2014} found explicit formulas for the densities of the Brownian motion on a number of compact symmetric spaces -- including the classical simple compact Lie groups -- and proved in Theorem 6 that all of them exhibit a cutoff phenomenon. Some computations for special unitary groups point towards compensations between the eigenvectors that involve the trace of matrices. (We note that the number of fixed points of a permutation is the trace of the associated permutation matrix.) This gives hope for finding the cutoff profiles for these diffusions.
\begin{question}
    Can we find the profiles for the Brownian motions on compact symmetric spaces?
\end{question}

For the random-to-random shuffle, the eigenvalues were found in \cite{DiekerSaliola2018randomtorandom} and the cutoff was proved in \cite{BernsteinNestoridi2019}. However, the eigenvectors have complicated recursive expressions that are difficult to manipulate, and finding the limiting profile is still open, see the open question in \cite[Section 6.2]{Nestoridi2024comparisonstar}.
Recently, Jain and Sawhney \cite{JainSawhney2024transpositionprofileotherproof} found another way to obtain the profile for random transpositions. Instead of understanding the compensations between eigenvectors, as done in \cite{Teyssier2020} (characters are sums of eigenvectors), they extract the limiting observables (the fixed points) and the associated eigenvalues using probabilistic arguments and discard the other observables via an $L^2$ bound.
These probabilistic arguments are delicate but offer an alternative way to find profiles that doesn’t rely on understanding the eigenvectors. It might be suited for chains such as the random-to-random shuffle where the eigenvectors are cumbersome to understand.

\medskip

One may also want to understand cutoff profiles for distances other than the total variation distance. Computing the $L^2$ distance only requires knowledge of the eigenvalues. In some cases obtaining information on all eigenvalues is difficult, but upon reaching such information the $L^2$ profile is relatively easy to compute. This is the case (in discrete time) for random transpositions and also for the random-to-random shuffle.

Another distance of particular interest is the separation distance, defined as follows in the case of transitive graphs. Let $\Gamma = (V, E)$ be a finite connected vertex-transitive graph and let $(X_t)_{t\geq 0}$ be the simple random walk on $\Gamma$, in discrete or continuous time, and started at some vertex $o\in V$. For $t\geq 0$, and $x,y\in V$, denote $p_t(x,y) = \bbP_x(X_t = y)$. For $t\geq 0$ the minimum of likelihood at time $t$ is the quantity $\mlh(t):= \min_{x\in V}p_t(o,x)$, and the separation distance is defined by
\begin{equation}
    \dsep(t) := 1 - |V| \mlh(t).
\end{equation}

\begin{question}\label{q: minimum of likelihood transpositions}
    For random transpositions on $\kS_n$, in continuous time and started at the identity permutation, is it true that the minimum of likelihood is attained at $n$-cycles for every $t\geq 0$?
\end{question}

Let us sketch how to compute the likelihood of $n$-cycles. For $n\geq 1$, denote by $\Id_n$ the identity permutation, and let $\gamma_n = (1 \, \ldots \, n)$, which is an $n$-cycle. Recall that irreducible representations $\lambda\in \widehat{\kS_n}$ of $\kS_n$ are indexed by Young diagrams, and that they index the eigenvalues $e_\lambda$ of the chain. It also follows from the Murnaghan--Nakayama rule that characters evaluated at an $n$-cycle are equal to $\pm 1$ if $\lambda$ is hook-shaped, and $0$ otherwise. Therefore, by Fourier inversion on $\kS_n$, omitting some technical details and integer parts, we have at $t_{n,c} = \frac{1}{2}n(\ln n + c))$
\begin{equation}
\begin{split}
    n!p_{t_{n,c}}(\Id_n, \gamma_n) = \sum_{\lambda \in \widehat{\kS_n}} d_\lambda e_\lambda^{t_{n,c}} \ch^\lambda(\gamma_n) & = o(1) + \sum_{j=0}^{n/2} \binom{n}{j} n^{-j}e^{-jc} (-1)^j \\
    & = o(1) + \sum_{j\geq 0}\frac{(-e^{-c})^j}{j!} = o(1) +  e^{-e^{-c}}.
\end{split} 
\end{equation}

A positive answer to Question \ref{q: minimum of likelihood transpositions} would therefore imply the following conjecture.

\begin{conjecture}
    Consider, for $n\geq 2$, the random transposition shuffle $(X_t^{(n)})_{t\geq 0}$ on $\kS_n$, in discrete or continuous time, and denote the associated separation distance to stationarity at time $t\geq 0$ by $\mathrm{d}_{\textup{sep}}^{(n)}(t)$. For any $c\in \bbR$ we have
    \begin{equation}
        \mathrm{d}_{\textup{sep}}^{(n)} \pg \lf \frac{1}{2}n(\ln n + c) \rf\pd \xrightarrow[n\to \infty]{} 1-e^{-e^{-c}}.
    \end{equation}
In words, the separation profile for random transpositions is given by a Gumbel distribution.
\end{conjecture}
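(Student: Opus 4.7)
The plan is to exploit the vertex-transitivity of $\Cay(\kS_n, \cC_2^{(n)})$ to rewrite $\mathrm{d}_{\textup{sep}}^{(n)}(t) = 1 - n!\min_{\sigma\in \kS_n}p_t(\Id_n, \sigma)$, and then carry out the two tasks already singled out in the excerpt. The first is the exact asymptotic calculation at an $n$-cycle: by Fourier inversion on $\kS_n$, Murnaghan--Nakayama (characters of $n$-cycles vanish except on hooks $\lambda = (n-j,1^j)$, where they equal $(-1)^j$), and the standard estimates $d_{(n-j,1^j)} = \binom{n-1}{j}$ and $e_{(n-j,1^j)}^{t_{n,c}} = n^{-j}e^{-jc}(1+o(1))$ uniformly for $j\leq n/2$, the computation collapses to $\sum_{j\geq 0}(-e^{-c})^j/j! = e^{-e^{-c}}$, as sketched in the excerpt. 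The hook tail $j > n/2$ is absorbed using submultiplicativity of eigenvalues together with crude dimension bounds.

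The second and much harder task is to verify that the $n$-cycle is in fact the argmin of $\sigma\mapsto p_t(\Id_n,\sigma)$ throughout the cutoff window. I would attempt a dichotomy on the cycle type of $\sigma$. If $\sigma$ has many fixed points, then the trivial and standard representations alone force $n!p_t(\Id_n,\sigma) \to 1 + \mathrm{Fix}(\sigma)e^{-c} + \ldots$, which vastly exceeds $e^{-e^{-c}}$ (this is essentially what the refined Diaconis--Shahshahani analysis in \cite{Teyssier2020} already gives). If $\sigma$ has few fixed points, a Fourier comparison via $L^\infty$ character bounds (Fomin--Lulov for classes with rigid cycle-type structure, Féray--Śniady or Larsen--Shalev in general) should still dominate the weighted alternating sum by $e^{-e^{-c}}$. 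The technically delicate regime is cycle types \emph{close} to an $n$-cycle, such as $(n-1,1)$, $(n-2,2)$ or $(n/2,n/2)$, where black-box character bounds are too weak and one has to track constants carefully through Murnaghan--Nakayama.

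The main obstacle is precisely Question \ref{q: minimum of likelihood transpositions}: character estimates in the literature are sharp enough for individual classes but become delicate when one needs a \emph{uniform} lower bound across all $\sigma\in \kS_n$, against the threshold $e^{-e^{-c}}$ which is itself exponentially small in $-c$ when $c$ is large. A realistic intermediate target is the unconditional one-sided conclusion
\begin{equation*}
\mathrm{d}_{\textup{sep}}^{(n)}\pg \lf \tfrac{1}{2}n(\ln n + c)\rf \pd \leq 1 - e^{-e^{-c}} + o(1),
\end{equation*}
which follows from the $n$-cycle computation alone since $\mlh(t) \leq p_t(\Id_n, \gamma_n)$. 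This reduces the full conjecture to a purely combinatorial minimization over conjugacy classes, which seems to be the correct next step before attacking the two-sided statement.
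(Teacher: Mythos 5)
You have not produced a proof, and it is worth being clear that the paper does not claim one either: the target statement is explicitly a \emph{conjecture}, which the paper reduces to the open Question \ref{q: minimum of likelihood transpositions} (that the minimum of likelihood is attained at $n$-cycles) plus the Fourier/Murnaghan--Nakayama computation of $n!\,p_{t_{n,c}}(\Id_n,\gamma_n) \to e^{-e^{-c}}$. Your plan reproduces exactly this reduction — same use of vertex-transitivity, same hook-character computation, same identification of the argmin problem as the missing ingredient — so the substantive open gap remains open in your write-up just as it does in the paper. The dichotomy on cycle types is a reasonable sketch of how one might attack it, but as you note yourself, the available character estimates (Fomin--Lulov, F\'eray--\'Sniady, Larsen--Shalev) are absolute-value bounds; near the cutoff time the relevant signed sums are not small in absolute value (already the standard representation contributes a term of order $e^{-c}$), so bounding $\bigl|n!\,p_t(\Id_n,\sigma)-1\bigr|$ cannot by itself yield the needed lower bound $n!\,p_t(\Id_n,\sigma)\geq e^{-e^{-c}}-o(1)$, which for large negative $c$ requires genuine cancellation of signs rather than size control.

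There is also a concrete error in the one part you claim unconditionally. From $\mlh(t)\leq p_t(\Id_n,\gamma_n)$ and $\dsep(t)=1-n!\,\mlh(t)$ one gets
\begin{equation*}
\mathrm{d}_{\textup{sep}}^{(n)}\pg \lf \tfrac{1}{2}n(\ln n + c)\rf \pd \;\geq\; 1 - n!\,p_{t_{n,c}}(\Id_n,\gamma_n) \;=\; 1 - e^{-e^{-c}} + o(1),
\end{equation*}
i.e.\ the \emph{lower} bound on the separation distance, not the upper bound you state as your ``realistic intermediate target.'' The upper bound $\dsep \leq 1-e^{-e^{-c}}+o(1)$ is equivalent to a uniform lower bound on $p_t(\Id_n,\sigma)$ over all $\sigma\in\kS_n$, which is precisely the hard, open half. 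So your proposed intermediate target has the inequality reversed: the direction that follows for free is the one you did not claim, and the direction you claim as free is the conjecture's core difficulty.
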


Finally, the states of the fruit-inosculated-tree Markov chains defined in this paper play very different roles, and the stationary measure is concentrated at one state. 
We informally ask the following. 
\begin{question}
    How is the statement of Theorem \ref{thm: main theorem in simple continuous time form} affected if we add restrictions on the sequence of Markov chains?
\end{question}
We believe, for instance, that restricting to simple random walks on regular graphs is a weak constraint, but that not every cutoff profile can be realized by simple random walks on vertex-transitive graphs, even without a restriction on the degree.
\begin{question}
    Does the statement of Theorem \ref{thm: main theorem in simple continuous time form} still hold for simple random walks on vertex-transitive graphs?
\end{question}

\section*{Acknowledgements}

We thank Persi Diaconis, Jonathan Hermon, Peter Nejjar, Evita Nestoridi, Justin Salez, and Dominik Schmid for useful conversations and comments. Part of the research was conducted as the author was supported by the Pacific Institute for the Mathematical Sciences, the Centre national de la recherche scientifique, and the Simons foundation, via a PIMS-CNRS-Simons postdoctoral fellowship.

\bibliographystyle{alpha}
\bibliography{bibliographieLucas}

\begin{thebibliography}{CDSZ22}

\bibitem[Ald83]{Aldous1983mixing}
David Aldous.
\newblock Random walks on finite groups and rapidly mixing {M}arkov chains.
\newblock In {\em Seminar on probability, {XVII}}, volume 986 of {\em Lecture Notes in Math.}, pages 243--297. Springer, Berlin, 1983.

\bibitem[AN25]{ArfaeeNestoridi2025JucysMurphyTranspositions}
Samira Arfaee and Evita Nestoridi.
\newblock Shuffling via {T}ranspositions.
\newblock {\em Arxiv preprint \textup{\url{https://arxiv.org/abs/2504.07918}}}, 2025+.

\bibitem[BD92]{BayerDiaconis1992}
Dave Bayer and Persi Diaconis.
\newblock Trailing the dovetail shuffle to its lair.
\newblock {\em Ann. Appl. Probab.}, 2(2):294--313, 1992.

\bibitem[BN19]{BernsteinNestoridi2019}
Megan Bernstein and Evita Nestoridi.
\newblock Cutoff for random to random card shuffle.
\newblock {\em Ann. Probab.}, 47(5):3303--3320, 2019.

\bibitem[BN22]{BufetovNejjar2022}
Alexey Bufetov and Peter Nejjar.
\newblock Cutoff profile of {ASEP} on a segment.
\newblock {\em Probab. Theory Related Fields}, 183(1-2):229--253, 2022.

\bibitem[BN25]{BernalNejjar2025LimitProfilesASEP}
David Bernal and Peter Nejjar.
\newblock Limit profiles of asep.
\newblock {\em In preparation}, 2025+.

\bibitem[CDSZ22]{ChatterjeeDiaconisSlyZhang2022repeatedaverages}
Sourav Chatterjee, Persi Diaconis, Allan Sly, and Lingfu Zhang.
\newblock A phase transition for repeated averages.
\newblock {\em Ann. Probab.}, 50(1):1--17, 2022.

\bibitem[CO25]{ChenOttolini2025shelfshuffle}
Ray Chen and Andrea Ottolini.
\newblock Cutoff in total variation for the shelf shuffle.
\newblock {\em Electron. Commun. Probab.}, 30:Paper No. 44, 10, 2025.

\bibitem[Del24]{Delhaye2024unitary}
Jean Delhaye.
\newblock Brownian motion on the unitary quantum group. construction and asymptotic study.
\newblock {\em Arxiv preprint \textup{\url{https://arxiv.org/abs/2409.06552}}}, 2024+.

\bibitem[DF23]{LivreDiaconisFulman2023}
Persi Diaconis and Jason Fulman.
\newblock {\em The mathematics of shuffling cards}.
\newblock American Mathematical Society, Providence, RI, [2023] \copyright 2023.

\bibitem[DFP92]{DiaconisFillPitman1992toptorandom}
Persi Diaconis, James~Allen Fill, and Jim Pitman.
\newblock Analysis of top to random shuffles.
\newblock {\em Combin. Probab. Comput.}, 1(2):135--155, 1992.

\bibitem[DG92]{DiaconisGraham1992AffineWalkHypercube}
Persi Diaconis and Ron Graham.
\newblock An affine walk on the hypercube.
\newblock {\em J. Comput. Appl. Math.}, 41(1-2):215--235, 1992.
\newblock Asymptotic methods in analysis and combinatorics.

\bibitem[DGM90]{DiaconisGrahamMorrison1990hypercube}
Persi Diaconis, R.~L. Graham, and J.~A. Morrison.
\newblock Asymptotic analysis of a random walk on a hypercube with many dimensions.
\newblock {\em Random Structures Algorithms}, 1(1):51--72, 1990.

\bibitem[DS81]{DiaconisShahshahani1981}
Persi Diaconis and Mehrdad Shahshahani.
\newblock Generating a random permutation with random transpositions.
\newblock {\em Z. Wahrsch. Verw. Gebiete}, 57(2):159--179, 1981.

\bibitem[DS18]{DiekerSaliola2018randomtorandom}
A.~B. Dieker and F.~V. Saliola.
\newblock Spectral analysis of random-to-random {M}arkov chains.
\newblock {\em Adv. Math.}, 323:427--485, 2018.

\bibitem[FTW22]{FreslonTeyssierWang2022}
Amaury Freslon, Lucas Teyssier, and Simeng Wang.
\newblock Cutoff profiles for quantum {L}\'{e}vy processes and quantum random transpositions.
\newblock {\em Probab. Theory Related Fields}, 183(3-4):1285--1327, 2022.

\bibitem[GK24]{GhoshKumari2025profilewarptoptwo}
Subhajit Ghosh and Nishu Kumari.
\newblock Limit profile for the transpose top-2 with random shuffle.
\newblock {\em Arxiv preprint \textup{\url{https://arxiv.org/abs/2407.19878}}}, 2024+.

\bibitem[HLP16]{HermonLacoinPeres2016}
Jonathan Hermon, Hubert Lacoin, and Yuval Peres.
\newblock Total variation and separation cutoffs are not equivalent and neither one implies the other.
\newblock {\em Electron. J. Probab.}, 21:Paper No. 44, 36, 2016.

\bibitem[HP18]{HermonPeres2018SensitivityMixingTimesCutoff}
Jonathan Hermon and Yuval Peres.
\newblock On sensitivity of mixing times and cutoff.
\newblock {\em Electron. J. Probab.}, 23:Paper No. 25, 34, 2018.

\bibitem[HS23]{HeSchmid2023ProfileASEPOneOpenBoundary}
Jimmy He and Dominik Schmid.
\newblock Limit profile for the asep with one open boundary.
\newblock {\em Arxiv preprint \textup{\url{https://arxiv.org/abs/2307.14941}}}, 2023+.

\bibitem[JS24]{JainSawhney2024transpositionprofileotherproof}
Vishesh Jain and Mehtaab Sawhney.
\newblock Hitting time mixing for the random transposition walk.
\newblock {\em Arxiv preprint}, 2024+.

\bibitem[Lac16a]{Lacoin2016cutoffprofileexclusioncircle}
Hubert Lacoin.
\newblock The cutoff profile for the simple exclusion process on the circle.
\newblock {\em Ann. Probab.}, 44(5):3399--3430, 2016.

\bibitem[Lac16b]{Lacoin2016cutoffadjacent}
Hubert Lacoin.
\newblock {Mixing time and cutoff for the adjacent transposition shuffle and the simple exclusion}.
\newblock {\em The Annals of Probability}, 44(2):1426 -- 1487, 2016.

\bibitem[LL19]{LabbéLacoin2019}
Cyril Labb\'{e} and Hubert Lacoin.
\newblock Cutoff phenomenon for the asymmetric simple exclusion process and the biased card shuffling.
\newblock {\em Ann. Probab.}, 47(3):1541--1586, 2019.

\bibitem[LP16]{LubetzkyPeres2016}
Eyal Lubetzky and Yuval Peres.
\newblock Cutoff on all ramanujan graphs.
\newblock {\em Geometric and Functional Analysis}, 26(4):1190--1216, 2016.

\bibitem[LP17]{LivreLevinPeres2019MarkovChainsAndMixingTimesSecondEdition}
David~A. Levin and Yuval Peres.
\newblock {\em Markov chains and mixing times}.
\newblock American Mathematical Society, Providence, RI, 2017.
\newblock Second edition of [ MR2466937], With contributions by Elizabeth L. Wilmer, With a chapter on ``Coupling from the past'' by James G. Propp and David B. Wilson.

\bibitem[LS13]{LubetzkySly2013}
Eyal Lubetzky and Allan Sly.
\newblock Cutoff for the {I}sing model on the lattice.
\newblock {\em Invent. Math.}, 191(3):719--755, 2013.

\bibitem[Mé14]{Méliot2014}
Pierre-Loïc Méliot.
\newblock The cut-off phenomenon for {B}rownian motions on compact symmetric spaces.
\newblock {\em Potential Anal.}, 40(4):427--509, 2014.

\bibitem[Nes24]{Nestoridi2024comparisonstar}
Evita Nestoridi.
\newblock Comparing limit profiles of reversible {M}arkov chains.
\newblock {\em Electron. J. Probab.}, 29:Paper No. 58, 14, 2024.

\bibitem[Nes25]{Nestoridi2025continuityprofiles}
Evita Nestoridi.
\newblock Continuity for {L}imit {P}rofiles of {R}eversible {M}arkov chains.
\newblock {\em Arxiv preprint \textup{\url{https://arxiv.org/abs/2503.09550}}}, 2025+.

\bibitem[NOT22]{NestoridiOlesker-Taylor2022limitprofiles}
Evita Nestoridi and Sam Olesker-Taylor.
\newblock Limit profiles for reversible {M}arkov chains.
\newblock {\em Probab. Theory Related Fields}, 182(1-2):157--188, 2022.

\bibitem[NOT24]{NestoridiOlesker-Taylor2024ProfilesProjections}
Evita Nestoridi and Sam Olesker-Taylor.
\newblock Limit profiles for projections of random walks on groups.
\newblock {\em Electron. J. Probab.}, 29:Paper No. 158, 22, 2024.

\bibitem[OTS24]{Olesker-TaylorSchmid2025}
Sam Olesker-Taylor and Dominik Schmid.
\newblock {L}imit {P}rofile for the {B}ernoulli--{L}aplace {U}rn.
\newblock {\em Arxiv preprint}, 2024.

\bibitem[PS25]{PedrottiSalez2025newcriterion}
Francesco Pedrotti and Justin Salez.
\newblock A new cutoff criterion for non-negatively curved chains.
\newblock {\em Arxiv preprint}, 2025.

\bibitem[Sal24]{Salez2024}
Justin Salez.
\newblock Cutoff for non-negatively curved {M}arkov chains.
\newblock {\em J. Eur. Math. Soc. (JEMS)}, 26(11):4375--4392, 2024.

\bibitem[Tey20]{Teyssier2020}
Lucas Teyssier.
\newblock Limit profile for random transpositions.
\newblock {\em Ann. Probab.}, 48(5):2323--2343, 2020.

\bibitem[Tey25]{Teyssier2025ProfilesConjugacyInvariant}
Lucas Teyssier.
\newblock Cutoff profiles for conjugacy invariant random walks on symmetric groups.
\newblock {\em In preparation}, 2025+.

\bibitem[Zha24]{Zhang2024CutoffProfileMetropolisBiasedCardShuffling}
Lingfu Zhang.
\newblock Cutoff profile of the {M}etropolis biased card shuffling.
\newblock {\em Ann. Probab.}, 52(2):713--736, 2024.

\end{thebibliography}
\end{document}